\definecolor{linkblue}{named}{Blue}
\DeclareMathOperator{\block}{block}
\title{\MakeUppercase{New Bounds for Facial Nonrepetitive Colouring}\thanks{This research is partially funded by NSERC and the Ontario Ministry of Research and Innovation.}}
\author{Prosenjit Bose,\thanks{School of Computer Science, Carleton University}\,\, Vida Dujmovi\'c,\thanks{Department of Computer Science and Electrical Engineering, University of Ottawa}\,\, Pat Morin,\footnotemark[2]\,\, Lucas Rioux-Maldague\thanks{Google}}
\begin{document}
\maketitle

\begin{abstract}
  We prove that the facial nonrepetitive chromatic number of any outerplanar graph is at most 11 and of any planar graph is at most 22.
\end{abstract}

\section{Introduction}

A sequence $S=s_1,s_2,\cdots,s_{2r}$, $r\ge 1$, is a \emph{repetition}
if $s_i=s_{r+i}$ for each $i\in\{1,\ldots,r\}$. For example, 1212
is a repetition, while 1213 is not. A \emph{block} of a sequence
$S$ is any subsequence of consecutive terms in $S$. A sequence
is \emph{nonrepetitive} if for every non-empty block $B$ of $S$,
$B$ is not a repetition. Otherwise $S$ is \emph{repetitive}. For
example, 1312124 is repetitive as it contains the block 1212 which is a
repetition, while 123213 is nonrepetitive as it contains no such block.
No sequence of length greater than three using only two symbols can be
nonrepetitive. A result by Axel Thue in 1906 states that nonrepetitive
sequences of infinite length can be created using three symbols
\cite{thue1906uber}. Thue's work is considered to have initiated the
study of the combinatorics of words \cite{allouche1999ubiquitous}.

A graph colouring variation on this theme was proposed by Alon
\etal\ \cite{alon2002nonrepetitive}.  A nonrepetitive (vertex) colouring
of a graph $G$ is an assignment of colours to the vertices of $G$ such
that, for every path $P$ in $G$, the sequence of colours of vertices
in $P$ is not a repetition.  The \emph{nonrepetitive chromatic number}
of $G$, denoted $\pi(G)$, is the minimum number of colours required to
nonrepetitively colour $G$.  In this setting, Thue's result states that,
for all $n\ge 1$, the path $P_n$ on vertices has $\pi(P_n)\le 3$.
Since its introduction, nonrepetitive
graph colouring has received much attention \cite{barat2013facial,
barat2007square, barat2008note, brevsar2007nonrepetitive,
currie2002cycle18, dujmovic2012planarlogn, dujmovic2011nonrepetitive,
fiorenzi2011thue, gagol2016pathwidth, gonccalves2014entropy,
grytczuk2007nonrepetitivesurvey, grytczuk2007nonrepetitive,
grytczuk2013new, harant2012nonrepetitive, kozik2013nonrepetitive,
kundgen2008nonrepetitive, pezarski2009non, przybylo2013facial,
schreyer2012facial, schreyer2013total}.

A well-known conjecture, due to Alon \etal\ \cite{alon2002nonrepetitive},
is that there exists a constant $K$ such that, for every planar graph
$G$, $\pi(G) \leq K$.  The current best upper bound for $n$-vertex planar
graphs is $O(\log n)$ \cite{dujmovic2012planarlogn}. No planar graph with
nonrepetitive chromatic number greater than 11 is known (see Appendix~A
in \cite{dujmovic2012planarlogn}).

More is known about the \emph{facial} version of the problem for embedded
planar graphs.  Harant and Jendrol \cite{harant2012nonrepetitive} asked
if every plane graph can be coloured with a constant number of colours
such that every \emph{facial} path\footnote{A facial path is a path that
is a contiguous subsequence of a facial walk; see \secref{preliminaries}
for a more rigorous definition.} is nonrepetitively coloured.  Barát and
Czap \cite{barat2013facial} answered this question in the affirmative
by showing that $24$ colours are sufficient.  We reduce this bound
to 22 by proving a bound of 11 for facial nonrepetitive colouring of
outerplane graphs.

\subsection{Related Work}

\paragraph{Nonrepetitive Colouring.}

It is known that some families of graphs have bounded
nonrepetitive chromatic number.  In their original work,
Alon \etal\ \cite{alon2002nonrepetitive} showed that $\pi(G) =
O(\Delta^2)$ if $G$ has maximum degree $\Delta$ and that there
are are graphs of maximum degree $\Delta$ with nonrepetitive
chromatic number $\Omega(\Delta^2/\log \Delta)$.  The constants
in the $O(\Delta^2)$ upper bound have been steadily improved
\cite{dujmovic2011nonrepetitive,grytczuk2007nonrepetitivesurvey,grytczuk2007nonrepetitive,harant2012nonrepetitive}.

Barát and Varjú \cite{barat2007square} and K{\"u}ndgen and Pelsmajer
\cite{kundgen2008nonrepetitive} independently showed that $\pi(G)\le
12$ if $G$ is outerplanar and, more generally, $\pi(G)\le c^t$ if $G$
has treewidth at most $t$.  (Barát and Varjú proved the latter bound
with $c=6$ while K{\"u}ndgen and Pelsmajer proved it with $c=4$.)
The bound of $4^t$ for $t$-trees is tight if $t=1$ (trees), but it
is not known if it is tight for other values of $t$. Even the upper
bound of 12 for outerplanar graphs may not be tight, as no outerplanar
graph with nonrepetititive chromatic number greater than 7 is known
\cite{barat2007square}.

\paragraph{Facial Nonrepetitive Colouring.}

Facial nonrepetitive colouring was first considered by Havet et
al \cite{havet2011facial}, who studied the edge-colouring variant
of the problem.  In this setting, they were able to show that
the edges of any plane graph can be 8-coloured so that every facial
trail\footnote{A \emph{facial trail} is a contiguous subsequence of the
edges traversed during the boundary walk of a face.} is coloured
nonrepetitively.  For the list-colouring version of this problem,
Przyby{\l}o \cite{przybylo2013facial} showed that lists with at least
12 colours are sufficient to colour the edges of any plane graph so that
every facial trail is coloured nonrepetitively.

For the vertex-colouring version we study, Harant and Jendrol
\cite{harant2012nonrepetitive} proved that $\pi_f(G)=O(\log\Delta)$ if
$G$ is a plane graph of maximum degree $\Delta$ and that $\pi_f(G)\le
16$ if $G$ is a Hamiltonian plane graph.  They also conjectured that
$\pi_f(G)=O(1)$ when $G$ is any plane graph.  As mentioned above, this
latter conjecture was confirmed by Barát and Czap \cite{barat2013facial},
who showed that, for any plane graph $G$, $\pi_f(G)\le 24$.  The results
of Barát and Czap \cite{barat2013facial} also extend to graphs embedded
in surfaces.  They show that a graph embedded on a surface of genus $g$
can be facially nonrepetitively $24^{2g}$-coloured.  The best lower
bounds for facial nonrepetitive chromatic numbers are 5 for plane graphs
and 4 for outerplane graphs \cite{barat2013facial}.

\section{Preliminary Results and Definitions}
\seclabel{preliminaries}

We assume the reader is familiar with standard graph theory terminology
as used by, e.g., Bondy and Murty \cite{bondy.murty:graph}.  All graphs
we consider are undirected, but not necessarily simple; they may contain
loops and parallel edges.  For a graph, $G$, we use the notations $V(G)$
and $E(G)$ to denote $G$'s vertex and edge sets, respectively. For
$S\subset V(G)$, $G[S]$ denotes the subgraph of $G$ induced by the
vertices in $S$ and $G-S=G[V(G)\setminus S]$.

A graph is \emph{$k$-connected} if it contains more than $k$ vertices
and has no vertex cut of size less than $k$.  A \emph{$k$-connected
component} of a graph $G$ is a maximal subset of vertices of $G$ that
induces a $k$-connected subgraph.  A \emph{bridge} in a graph $G$ is an
edge whose removal increases the number of connected components.
A graph is \emph{bridgeless} if it has no bridges.

A \emph{plane graph} $G$ is a fixed embedding of a graph in the plane
such that its edges intersect only at their common endpoints. An
\emph{outerplane} graph $G$ is a plane graph such that all the
vertices of $G$ are incident on the outer face of $G$. A \emph{chord}
in an outerplane graph is an edge that is not incident to the outer
face. A \emph{cactus graph} is an outerplane graph with no chords.
An \emph{ear} in an simple outerplane graph is an inner face that is
incident to exactly one chord. An ear is \emph{triangular} if it has
exactly three vertices.  The \emph{weak dual} of a outerplane graph $G$
is a forest whose vertices are the inner faces of $G$ and that contains
the edge $fg$ if the face $f$ and the face $g$ have a chord in common.
Note that the ears of $G$ are leaves in the weak dual and that, if $G$
is biconnected, then its weak dual is a tree.

A \emph{walk} in a graph $G$ is a sequence of vertices
$v_0,\ldots,v_{\ell-1}$ such that, for every $i\in\{0,\ldots,\ell-2\}$
the edge $v_iv_{i+1}$ is in $E(G)$.  The walk is \emph{closed} if
$v_{\ell-1}v_0$ is also in $E(G)$.  A walk is a \emph{path} if all
its vertices are distinct.  A \emph{facial walk} in a plane graph
$G$ is a closed walk $v_0,\ldots,v_{\ell-1}$ such that, for every
$i\in\{0,\ldots,\ell-1\}$, the edges $v_{(i-1)\bmod \ell} v_i$ and
$v_iv_{(i+1)\bmod\ell}$ occur consecutively in the counterclockwise
cyclic ordering of the edges incident to $v_i$ in the embedding of $G$.
A \emph{facial path} is a contiguous subsequence of a facial walk that
is also a path in $G$.  A facial path is an \emph{outer-facial path}
if it appears in a facial walk of the outer face of $G$ and it is an
\emph{inner-facial} path if it appears in a facial walk of some inner
face of $G$.

Before proceeding with our results, we introduce a helper lemma
due to Havet \etal~\cite{havet2011facial} and two theorems that
will be used throughout the paper. The helper lemma provides a way to
interlace nonrepetitive sequences.

\begin{lem}[Havet \etal~\cite{havet2011facial}]\lemlabel{interleave}
  Let $B=B_1,B_2,\ldots,B_k$ be a nonrepetitive sequence over an alphabet
  $\mathcal{B}$ in which each $B_i$ has size at least 1. For each $i
  \in \{0,\ldots,k\}$, let $A_i$ be a (possibly empty) nonrepetitive
  sequence over an alphabet $\mathcal{A}$ with $\mathcal{B} \cap
  \mathcal{A} = \emptyset$. Then $S = A_0, B_1, A_1, \ldots, B_k, A_k$
  is a nonrepetitive sequence.
\end{lem}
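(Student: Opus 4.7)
The plan is to argue by contradiction. Suppose $S$ contains a repetition $X_1X_2$ of length $2r$, with $X_1=X_2$ of length $r\ge 1$. The crucial observation is that because $\mathcal{A}\cap\mathcal{B}=\emptyset$, a position $i\in\{1,\ldots,r\}$ of $X_1$ holds a letter of $\mathcal{B}$ if and only if the paired position $r+i$ of $X_2$ does; hence the $\mathcal{A}$-positions and $\mathcal{B}$-positions of $X_1$ and $X_2$ line up under the shift by~$r$.

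If $X_1X_2$ contains no letter of $\mathcal{B}$, then because each $B_i$ has size at least one, the whole block $X_1X_2$ must sit inside a single $A_i$; but $A_i$ is nonrepetitive, a contradiction. Otherwise, the $\mathcal{B}$-letters appearing in $X_1$ and those appearing in $X_2$ form, by the observation above, identical nonempty subsequences $Y$; read from left to right in $S$ with the intervening $\mathcal{A}$-letters deleted, they produce the sequence $YY$. But the $\mathcal{B}$-letters of any contiguous substring of $S$, read in order, form a contiguous substring of $B_1B_2\cdots B_k$, because $S$ is obtained by inserting the $A_i$'s \emph{between} the $B_i$'s without splitting or reordering them. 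Hence $YY$ is a repetition inside $B$, contradicting the nonrepetitiveness of $B$.

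The main obstacle is really only the bookkeeping in the second case: one must check both that $Y$ is nonempty (which follows from the position-matching observation together with the assumption that at least one $\mathcal{B}$-letter appears in $X_1X_2$) and that deleting the $\mathcal{A}$-letters from a contiguous substring of $S$ yields a contiguous substring of $B$. The hypothesis that every $B_i$ has length at least one is used precisely to prevent two consecutive $A_i$'s from merging into a single $\mathcal{B}$-free interval, so that the no-$\mathcal{B}$-letter case really does force $X_1X_2$ inside one $A_i$.
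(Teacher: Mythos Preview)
Your proof is correct. The paper itself does not supply a proof of this lemma; it is quoted from Havet et al.\ and used as a black box throughout. Your contradiction argument---matching positions by alphabet, then either landing inside a single $A_i$ or projecting onto the $\mathcal{B}$-letters to produce a repetition in $B$---is the standard one, and you have correctly isolated where the nonemptiness of each $B_i$ is needed.
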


We will require two results about the nonrepetitive chromatic number of
trees and cycles:

\begin{thm}[Alon \etal~\cite{alon2002nonrepetitive}]\thmlabel{tree}
  For every tree, $T$, $\pi(T) \leq 4$.
\end{thm}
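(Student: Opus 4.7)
The plan is to root $T$ at an arbitrary vertex $r$ and then design a 4-colouring of $V(T)$ that is nonrepetitive on every path. The key structural observation is that every path $P$ in $T$ has a unique highest vertex, namely the lowest common ancestor $a$ of its two endpoints; thus $P$ decomposes as an ascending portion from one endpoint up to $a$ followed by a descending portion from $a$ to the other endpoint, and the sequence of depths along $P$ is unimodal with minimum $d(a)$. Proving nonrepetitiveness therefore reduces to two tasks: handling repetitions that live entirely in one side of $P$, and handling repetitions that straddle $a$.

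First I would fix an infinite nonrepetitive sequence $t_1, t_2, \ldots$ over a three-letter alphabet $\{A,B,C\}$, furnished by Thue's theorem, and consider the depth-based colouring $\ell(v) = t_{d(v)}$. Along any strictly ascending or strictly descending sub-path of $P$, the depths are distinct and consecutive, so the $\ell$-colour sequence is a (possibly reversed) contiguous window of the Thue sequence and is therefore nonrepetitive. This handles repetitions of the first kind, and also shows that three colours already suffice whenever $P$ is entirely ascending or entirely descending.

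The crux is the second task: near $a$ the depth sequence is palindromic, namely $\ldots, d(a){+}2, d(a){+}1, d(a), d(a){+}1, d(a){+}2, \ldots$, which is a genuine source of repetitions for any colouring that depends only on depth. To defeat this I would introduce the fourth colour, using it as a ``marker'' that distinguishes the ascending side of $a$ from the descending side of any potential repetition. Concretely, I would attach to each non-root vertex a second coordinate in $\{0,1\}$ that is a function of the edge from the vertex to its parent (e.g.\ determined by the local rotation at the parent), and combine $\ell(v)$ with this extra bit to obtain a colour $c(v)$ drawn from a 4-element palette, while appealing to \lemref{interleave} so that correctness of the combined colouring can be reduced to separate nonrepetitiveness statements about the $\ell$-subsequence and the marker-subsequence of $P$.

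The hardest step will be the second task: choosing the extra bit so that it provably breaks every repetition straddling $a$. The palindromic symmetry of depths around $a$ means that for any candidate repetition $B_1B_2$ centred near $a$, the positions in $B_1$ and the corresponding positions in $B_2$ sit on opposite sides of $a$ and see the same value of $\ell$, so the full burden of forbidding the repetition falls on the extra bit; one must arrange this bit so that the required pattern of agreements is structurally impossible on any V-path. I expect this to be the true content of the theorem, with \lemref{interleave} and Thue's sequence doing all the easy work around it.
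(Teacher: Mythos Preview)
The paper does not prove \thmref{tree}; it is quoted from Alon \etal\ and used as a black box. That said, the paper's own toolkit contains the standard argument, and comparing your proposal against it exposes a real gap.

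Your diagnosis is correct: depth-colouring by a ternary Thue word fails precisely because the depth sequence around the apex $a$ is a palindrome. But your proposed repair---an extra coordinate in $\{0,1\}$ attached to each non-root vertex via ``the local rotation at the parent''---does not produce a $4$-colouring. Combining three Thue colours with a bit gives six colours, not four; if instead you mean to overwrite the bit-$1$ vertices with a single fourth colour $D$, then along any path the $D$-subsequence is a word over a one-letter alphabet and is a repetition as soon as two $D$'s occur, so \lemref{interleave} cannot be invoked. More fundamentally, a bit determined by the edge to the parent varies independently among siblings, so on the two branches below $a$ the bits are unrelated and nothing forces a disagreement on a would-be repetition. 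You yourself flag this step as ``the true content of the theorem'' and leave it open; as written, the proposal is a plan whose key lemma is missing.

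The missing idea is to enrich the \emph{sequence}, not the tree. Take a nonrepetitive sequence $s_0,s_1,\ldots$ over four symbols that is also \emph{palindrome-free} (the proof of \lemref{cactus} in this paper explains how to construct one from a ternary Thue word), and colour each vertex $v$ by $s_{d(v)}$. Now \lemref{level_pattern_palindrome_free} applies: if some path $P=P_1P_2$ were repetitively coloured, then $P_1$ and $P_2$ would have identical depth sequences. But the depth sequence of $P$ in a rooted tree is strictly decreasing to a unique minimum at $a$ and then strictly increasing, so that minimum depth cannot appear in both halves. That is the entire proof, using depth alone---no edge-dependent bits and no interleaving.
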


\begin{thm}[Currie \cite{currie2002cycle18}]\thmlabel{cycle}
  For every $n>2$, the cycle $C_n$ on $n$ vertices has
  \[
  \pi(C_n) = \begin{cases}
              4 & \text{ if } n \in \{5,7,9,10,14,17\} \\
              3 & \text{ otherwise. }
             \end{cases}
  \]
\end{thm}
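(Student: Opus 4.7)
The plan is to split into two bounds and handle the generic versus exceptional cases separately.

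The universal lower bound $\pi(C_n)\ge 3$ is immediate: $C_3$ requires three colours even for proper colouring, and for $n\ge 4$ the cycle contains $P_n$ as a subgraph, and no binary sequence of length at least four avoids a square, so $\pi(C_n)\ge\pi(P_n)\ge 3$. The matching upper bound $\pi(C_n)\le 4$ when $n\in\{5,7,9,10,14,17\}$ follows by taking a nonrepetitive 3-colouring of the path $v_1,\ldots,v_{n-1}$ (which exists by Thue's theorem) and assigning a fresh fourth colour to $v_n$. A square in the resulting cyclic colouring would be an even-length arc that either avoids $v_n$ (impossible, since the path is nonrepetitive) or contains $v_n$ exactly once (impossible, since only one of its two halves would then contain the unique colour $4$).

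For the upper bound $\pi(C_n)\le 3$ when $n\notin\{5,7,9,10,14,17\}$, I would construct explicit cyclic square-free ternary colourings. For the small admissible values $n\in\{3,4,6,8,11,12,13,15,16\}$, one exhibits colourings by hand. For large $n$, I would appeal to an infinite Thue word $w$ over $\{1,2,3\}$: the number of distinct length-$n$ factors of $w$ grows without bound, whereas the obstruction to closing up a factor cyclically is controlled by only a bounded amount of boundary data (a short prefix and suffix must avoid a finite list of pattern pairs that would produce a wrap-around square). A counting/pigeonhole argument then produces a factor whose cyclic closure is square-free.

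The main obstacle, and the technical heart of Currie's original proof, is the exceptional-case lower bound: showing that for each $n\in\{5,7,9,10,14,17\}$ no cyclic square-free ternary word of length $n$ exists. In principle this is a finite case analysis, but a clean human-readable proof requires careful combinatorics on words: reduce the search space by cyclic-rotation and colour-permutation symmetries, enumerate the finite set of square-free ternary factors of the relevant lengths, and verify that every cyclic closure introduces a square. This step is where the bulk of the work lies and where the arithmetic structure of the exceptional set $\{5,7,9,10,14,17\}$ emerges, as opposed to the generic and easily handled upper-bound constructions.
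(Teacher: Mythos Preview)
The paper does not prove this theorem at all: it is quoted as a preliminary result due to Currie and used as a black box, so there is no ``paper's proof'' to compare your proposal against.

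As for the proposal itself, the easy parts are fine: the lower bound $\pi(C_n)\ge 3$ is correct, and the argument for $\pi(C_n)\le 4$ by colouring a Hamiltonian path with three colours and giving the remaining vertex a fresh colour is correct. The two substantive steps, however, are not actually carried out. For the upper bound $\pi(C_n)\le 3$ when $n\notin\{5,7,9,10,14,17\}$, your ``counting/pigeonhole'' sketch is too vague to be a proof: you would need to make precise what ``boundary data'' controls the wrap-around squares and show quantitatively that the factor complexity of a Thue word outruns the number of bad boundary configurations; this is not obvious, and Currie's original argument proceeds instead by explicit morphic constructions and concatenation of short circular square-free words to reach every length $n\ge 18$. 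For the lower bound $\pi(C_n)\ge 4$ at the six exceptional lengths, you correctly identify that a finite search is needed but do not perform it, and this is genuinely where the content of the theorem lies. So what you have is an accurate outline of the shape of a proof, with the two hard halves left as acknowledged gaps.
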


\section{Outerplane Graphs}

We begin with a simple lemma that allows us to focus, when convenient,
on simple outerplane graphs.

\begin{lem}\lemlabel{non-simple}
  Let $G$ be a simple outerplane graph and let $G'$ be an outerplane
  graph obtained by adding parallel edge and/or loops to $G$.  Then, any
  facially nonrepetitive colouring of $G$ is also a facially nonrepetitive
  colouring of $G'$, so $\pi_f(G')\le \pi_f(G)$.
\end{lem}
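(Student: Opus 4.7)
The plan is to show that any facial path in $G'$ on three or more vertices is already a facial path of $G$, and that any facial path in $G'$ of at most two vertices is automatically nonrepetitive under any facially nonrepetitive colouring of $G$. Granting this, if $c$ is a facially nonrepetitive colouring of $G$ then, viewed as a colouring of $V(G)=V(G')$, it is also facially nonrepetitive on $G'$: longer facial paths in $G'$ inherit nonrepetitiveness directly from $c$ on $G$; single-vertex paths are trivially nonrepetitive; and a two-vertex facial path $u,v$ in $G'$ must use an edge of $G$, since a loop cannot produce a two-vertex path (its two endpoints coincide) and every parallel edge in $G'$ is, by construction, parallel to some edge $uv\in E(G)$. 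Because every edge of a plane graph bounds at least one face, $u,v$ is also a facial path of $G$, so $c(u)\ne c(v)$.

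To establish the main structural claim, I would analyze how each modification affects the facial walks of $G$. Adding a parallel edge to an existing $uv\in E(G)$ creates a new digon face whose boundary walk has vertex sequence $u,v$, and leaves the vertex sequences of all other facial walks unchanged (the only other face affected has one occurrence of the edge $uv$ replaced by its parallel copy, which does not alter the underlying vertex sequence). Adding a loop at $v$ creates a new face whose boundary walk is the single-vertex sequence $v$, and modifies the facial walk of the face into which the loop is drawn by inserting an extra occurrence of $v$: a walk $\ldots,u,v,w,\ldots$ becomes $\ldots,u,v,v,w,\ldots$. Since a facial path is required to be a \emph{path}, it cannot straddle this repeated $v$, so every facial path in the modified walk is also a contiguous subsequence of the original walk in $G$ and is therefore a facial path of $G$.

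The only subtlety is this last observation about how a loop affects the enclosing facial walk, but it is a routine consequence of plane embeddings: a loop at $v$ is drawn inside a single face $f$ incident to $v$, and the two ends of the loop appear consecutively in the rotation at $v$, so tracing the boundary of the modified $f$ visits $v$ twice in succession. Once these structural observations are in place, the rest of the argument is immediate.
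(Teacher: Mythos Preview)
Your proof is correct and follows essentially the same approach as the paper's: both argue that every facial path of $G'$ is (as a vertex sequence) already a facial path of $G$. The paper is slightly terser---it simply notes that no facial path can traverse a loop and that the Jordan curve formed by $e\cup e'$ encloses no vertices because $G'$ is outerplane---whereas you spell out the effect on the facial walks in more detail and handle one- and two-vertex paths separately (a distinction that turns out to be unnecessary, since those short paths are also facial paths of $G$). One small point worth making explicit in your parallel-edge case: the reason the new face is a digon, rather than something larger, is precisely the outerplanarity of $G'$, which forces the region between $e$ and $e'$ to be vertex-free; the paper states this, and your argument uses it implicitly.
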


\begin{proof}
   We argue that any facial path (described as a sequence of vertices)
   in $G'$ is also a facial path in $G$.  Therefore, by facially
   nonrepetitively colouring $G$, we obtain a facial nonrepetitive
   colouring of $G'$.

   First, note that no facial path uses a loop, so the addition of loops
   does not introduce new facial paths in $G'$.  When a (non-loop) edge
   $e'$ is added parallel to an existing edge $e$ of $G$, the union
   of the embeddings of $e$ and $e'$ form a Jordan curve that does not
   contain any vertices of $G$ (since we require $G'$ to be outerplane).
   This implies that any facial path in $G'$ that uses the new edge $e'$
   exists in $G$ as a facial path that uses the edge $e$.
\end{proof}

Next, we introduce a definition that is crucial to the rest of the paper.
Let $G$ be an outerplane graph. A \emph{blocking set} of $G$ is a set
of vertices $B \subseteq V(G)$ such that for each 2-connected component
$H$ of $G$, $H-B$ is a tree and for each inner face $F$,
$V(F) \setminus B \ne \emptyset$.  See Figure \ref{fig:blocking-set}
for an example of a blocking set.

\begin{figure}
  \begin{center}
     \includegraphics{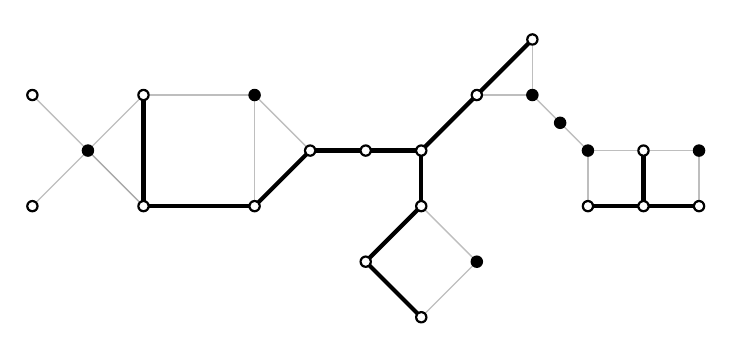}
  \end{center}
  \caption{The blocking set, $B$ (black vertices), of an outerplane graph,
     $G$,  (with the edges of $G-B$ shown in bold).}
  \figlabel{blocking-set}
\end{figure}

The definition of a blocking set is subtle and implies some properties
that we will use throughout.   

\begin{obs}\obslabel{no-chords}
   For any blocking set $B$ of $G$, $B$ does not include both endpoints
   of any chord $c$ of $G$. 
\end{obs}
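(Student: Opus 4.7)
The plan is to proceed by contradiction, assuming both endpoints $u, v$ of a chord $c$ lie in $B$. Let $H$ be the $2$-connected component of $G$ containing $c$. Since $H$ is a $2$-connected outerplane graph, its outer face is bounded by a cycle, and the chord $c$ splits this cycle into two arcs whose internal vertex sets $V_1$ and $V_2$ are both nonempty (otherwise $c$ would be an outer-face edge of $H$ rather than a chord). Write $\mathcal{F}_i$ for the set of inner faces of $H$ lying on the $V_i$-side of $c$; each $\mathcal{F}_i$ is also nonempty.

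The key geometric step I would establish is that $V_1$ and $V_2$ lie in different connected components of $H - \{u, v\}$. This is where outerplanarity is essential: the chord $c$ together with either arc of the outer cycle forms a Jordan curve enclosing the vertices on that side, and no edge of an outerplane graph can cross such a curve. So no edge of $H$ goes between $V_1$ and $V_2$ except through $u$ or $v$.

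Once this separation is in hand, the blocking-set conditions force a contradiction. Since $\{u, v\} \subseteq B$, the subgraph $H - B$ is contained in $H - \{u, v\}$; being a tree, it must lie entirely inside a single component of $H - \{u, v\}$, say the one containing $V_1$. Then $V_2 \subseteq B$, and for any $F \in \mathcal{F}_2$ we have $V(F) \subseteq V_2 \cup \{u, v\} \subseteq B$, violating the requirement $V(F) \setminus B \ne \emptyset$. The degenerate case $H - B = \emptyset$ is handled identically by applying the same face argument to any inner face of $H$.

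I expect the only point requiring any care to be the Jordan-curve separation, i.e., the assertion that outerplanarity rules out any edge of $H$ joining $V_1$ to $V_2$; the remaining steps amount to unpacking the two clauses of the blocking-set definition.
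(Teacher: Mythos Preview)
Your argument is correct and follows the same idea as the paper: a chord of a biconnected outerplane component separates it, so placing both endpoints in $B$ would prevent $H-B$ from being a (connected) tree. The paper compresses this into a single sentence (``otherwise $H-B$ would be disconnected''), whereas you additionally invoke the face condition to dispose of the edge case where one side of the chord might lie entirely in $B$; this makes your version more complete than the paper's terse justification, but the route is the same.
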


\begin{obs}\obslabel{consecutive}
   For any blocking set $B$ of $G$ and any inner face $F$ of $G$, the
   vertices of $V(F)\cap B$ occur consecutively on the boundary of $F$. 
   In other words, $F-B$ is a non-empty path.  
\end{obs}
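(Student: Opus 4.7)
The plan is to prove the observation by a short double-counting argument that combines Euler's formula with \obsref{no-chords}.

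First, I reduce to the nontrivial case. If the boundary of $F$ has at most two vertices, the statement is immediate, so I assume $F$ is bounded by a simple cycle $C_F$ of length at least three. The edges of $C_F$ all lie in a single 2-connected block $H$ of $G$, which is a 2-connected outerplane graph whose outer face is bounded by a Hamiltonian cycle $C^*$ and whose remaining edges are chords. Write $n = |V(H)|$, $b = |B \cap V(H)|$, $f_H$ for the number of inner faces of $H$, and $m_{BB}$ for the number of edges of $H$ with both endpoints in $B$. Euler's formula, combined with the fact that $H-B$ is a tree on $n-b$ vertices, gives that the number of edges of $H$ with at least one endpoint in $B$ is exactly $f_H + b$.

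The key step is two double counts over the face set of $H$. For each face $F'$ of $H$, let $b_{F'} = |V(F') \cap B|$ and let $k_{F'}$ denote the number of maximal $B$-arcs appearing on the boundary cycle of $F'$. Vertex-face incidence together with the degree-sum identity yields $\sum_{F'} b_{F'} = m_{BB} + (f_H + b)$, while counting boundary edges face by face gives $\sum_{F'}(b_{F'} - k_{F'}) = 2 m_{BB}$. The per-face identity that the number of boundary edges of $F'$ with both endpoints in $B$ equals $b_{F'} - k_{F'}$ holds for every face because $V(F') \setminus B \neq \emptyset$ (for inner faces by the blocking-set definition, and for the outer face because $H - B$ is nonempty). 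Now \obsref{no-chords} implies that every edge counted by $m_{BB}$ lies on $C^*$, so $m_{BB} = b - k^*$, where $k^*$ is the number of $B$-arcs on $C^*$. Substituting into the two identities and simplifying yields $\sum_{\text{inner } F'} k_{F'} = f_H$.

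To finish, for every inner face $F'$ we must have $V(F') \cap B \neq \emptyset$---otherwise its boundary cycle would itself be a cycle in the tree $H - B$---so $k_{F'} \geq 1$. Together with $\sum_{\text{inner}} k_{F'} = f_H$, this forces $k_{F'} = 1$ for every inner face, which is precisely the statement that $V(F) \cap B$ occurs cyclically consecutively on the boundary of $F$, and hence that $F - B$ is a non-empty path. The main obstacle I expect is careful book-keeping in the two double counts and verifying the per-face edge-count formula uniformly, while ensuring that the outer-face contribution $k^*$ cancels cleanly between the two identities.
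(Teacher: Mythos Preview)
Your proof is correct, but it takes a markedly different route from the paper's.  The paper dispatches this observation in one sentence: if $V(F)\cap B$ were not consecutive on the boundary cycle $C_F$, then $V(F)\setminus B$ would split into at least two arcs, and these arcs would lie in different components of $H-B$ (because in a $2$-connected outerplane graph any path between two vertices on $C_F$ must trace through one of the two $C_F$-arcs joining them).  So the paper's argument is local, face by face, and leans on the standard separator property of inner faces in outerplane graphs.

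Your argument, by contrast, is global: you use Euler's formula to count the edges incident to $B$, set up two vertex--face and edge--face double counts, invoke \obsref{no-chords} to pin down $m_{BB}=b-k^*$, and deduce the exact identity $\sum_{\text{inner }F'} k_{F'}=f_H$, which together with $k_{F'}\ge 1$ forces $k_{F'}=1$ everywhere.  All the book-keeping checks out (in particular, the per-face edge formula $b_{F'}-k_{F'}$ is valid because $V(F')\setminus B\neq\emptyset$ on every face, and in a $2$-connected plane graph each vertex meets as many faces as its degree).  What your approach buys is a self-contained argument that never appeals to the outerplane separation property, and it yields the pleasant equality $\sum k_{F'}=f_H$ as a byproduct; what the paper's approach buys is brevity---the observation really is immediate once you see that non-consecutive $B$-vertices on $C_F$ disconnect $H-B$.
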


Observations~\ref{obs:no-chords} and \ref{obs:consecutive} are true
because, otherwise, $H-B$ would be disconnected for the 2-connected
component containing $c$ or $F$, respectively.


\begin{lem}\lemlabel{biconnected}
  For every biconnected outerplane graph, $G$, and any vertex $v\in
  V(G)$, there exists a blocking set $B$ of $G$ such that $v\in B$ and,
  for each inner face $F$ of $G$, $|B\cap V(F)|=1$.
\end{lem}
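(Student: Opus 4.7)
The plan is to proceed by induction on the number $k$ of inner faces of $G$, assuming $G$ is simple so that every inner face has at least three vertices. For $k=1$, the graph $G$ is a cycle and $B=\{v\}$ clearly satisfies all the conditions. For $k\ge 2$, the weak dual of $G$ is a tree with at least two leaves, so $G$ has an ear $F$ whose chord $c=xy$ is shared with another inner face $F^*$. Let $P = x, p_1, \dots, p_{m-1}, y$ be the portion of the boundary of $F$ other than $c$, with $m\ge 2$ by simplicity; biconnectedness also forces $V(F)\cap V(F^*) = \{x,y\}$, since a third common vertex would be a $2$-cut. I would set $G' = G - \{p_1, \dots, p_{m-1}\}$, which is a biconnected outerplane graph with $k-1$ inner faces in which $xy$ becomes an outer edge; biconnectedness of $G'$ follows by rerouting through $xy$ any path of $G$ that traverses the ear.

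If $v\in V(G')$, I apply induction to $G'$ and $v$, obtaining a blocking set $B'$ with $v\in B'$ and $|B'\cap V(F')| = 1$ for every inner face $F'$ of $G'$. Let $u$ be the unique element of $B'\cap V(F^*)$. If $u\in\{x,y\}$, I take $B = B'$: the internal vertices of $P$ form a dangling path in $G - B$ attached to whichever of $x,y$ is not in $B$, so $G - B$ is still a tree, and $|B\cap V(F)| = 1$ since $u$ is the only element of $V(F)\cap B$. If $u\notin\{x,y\}$, then $x,y\notin B'$ and I take $B = B'\cup\{p_1\}$; since $p_1$ lies on no inner face other than $F$, all intersection counts remain correct, and $G - B$ is $G' - B'$ together with a dangling path $p_2, \dots, p_{m-1}$ attached to $y$.

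If instead $v = p_i$ for some internal index $i$, I first choose any $u\in V(F^*)\setminus\{x,y\}$, which exists because $|V(F^*)|\ge 3$, and apply induction to $G'$ with $u$ in place of $v$. The resulting $B'$ has $B'\cap V(F^*) = \{u\}$, so $x,y\notin B'$, and then $B = B'\cup\{v\}$ does the job: $v$ is the only element of $V(F)\cap B$, $v$ lies on no other inner face, and in $G - B$ the remaining internal vertices of $P$ split into two (possibly empty) dangling paths attached to $x$ and $y$.

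I expect this last case to be the main obstacle: $v$ is trapped inside the very ear that is being contracted away, and resolving it cleanly requires the stronger inductive application of prescribing $F^*$'s representative, which is only possible because $|V(F^*)|\ge 3$ and $V(F)\cap V(F^*) = \{x,y\}$. The only other technicality is checking that $G'$ remains biconnected after removing the ear's internal vertices, which is handled by the rerouting argument above.
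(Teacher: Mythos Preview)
Your proof is correct and follows the same induction-on-inner-faces via ear-removal strategy as the paper. The one difference is that the paper sidesteps your Case~2 entirely: since a biconnected outerplane graph with at least two inner faces has at least two ears, the paper simply chooses an ear $F$ whose interior does \emph{not} contain $v$, so the prescribed vertex always survives into $G'$ and only your Case~1 analysis is needed.
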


\begin{proof}
  The proof is by induction on the number of inner faces.  If $G$ has
  only one inner face, we take $B=\{v\}$ and we are done.  Otherwise,
  select some ear, $F$ of $G$ whose chord is $uw$ and such that
  $v\not\in V(F)\setminus\{u,w\}$. Such an ear $F$ always exists because
  $G$ has at least two ears.  Let $G'=G-(V(F)\setminus\{u,w\})$.
  The graph $G'$ has one less inner face than $G$ so, by induction, it
  has a blocking set $B'$ that satisfies the conditions of the lemma.
  There are two cases to consider:
  \begin{enumerate}
    \item If one of $u$ or $w$ is in $B'$ then we take $B=B'$ to obtain
      a blocking set that satisifes the conditions of the lemma.

    \item Otherwise, let $x$ be any vertex in $V(F)\setminus\{u,w\}$ and
      take $B=B'\cup\{x\}$ to obtain a blocking set that satisifes the
      conditions of the lemma. \qedhere
  \end{enumerate}
\end{proof}

\Lemref{biconnected} allows us to prescribe that a particular vertex
$v$ be included in the blocking set, but it will also be convenient to
exclude a particular vertex $v$ by using \lemref{biconnected} to force
the inclusion of $v$'s neighbour on the outer face (which is also on
some inner face with $v$).

\begin{cor}\corlabel{biconnected-out}
  For every biconnected outerplane graph, $G$, and any vertex $v\in
  V(G)$, there exists a blocking set $B$ of $G$ such that $v\not\in B$ and,
  for each inner face $F$ of $G$, $|B\cap V(F)|=1$.
\end{cor}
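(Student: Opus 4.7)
The plan is to derive the corollary as a one-line consequence of \lemref{biconnected} by prescribing the inclusion of a vertex whose presence in $B$ automatically forces $v$ to be excluded. Since $G$ is biconnected and outerplane, its outer face is a Hamilton cycle through $V(G)$, so $v$ has exactly two neighbours on the outer face; I would let $w$ denote either of these. The outer edge $vw$ is incident to precisely one inner face, call it $F_0$, and by construction both $v$ and $w$ lie on the boundary of $F_0$.

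I would then invoke \lemref{biconnected} on $G$ with the prescribed vertex $w$, obtaining a blocking set $B$ with $w\in B$ and $|B\cap V(F)|=1$ for every inner face $F$. Applying the uniqueness clause to $F_0$ gives $B\cap V(F_0)=\{w\}$. Since $v$ is a distinct vertex of $V(F_0)$, this identity already implies $v\notin B$ as a global statement (not merely $v\notin B\cap V(F_0)$), because if $v$ belonged to $B$ it would also belong to $B\cap V(F_0)$, contradicting $B\cap V(F_0)=\{w\}$. Hence $B$ is the blocking set required by the corollary.

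I do not anticipate any serious obstacle: the proof is essentially an observation about how the strong uniqueness guarantee of \lemref{biconnected} can be turned from an inclusion tool into an exclusion tool. The only mild subtlety is checking that a neighbour of $v$ on the outer face automatically shares an inner face with $v$, which is immediate from biconnected outerplanarity since every outer-face edge bounds exactly one inner face.
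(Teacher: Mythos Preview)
Your argument is correct and is exactly the approach the paper takes: it derives the corollary by applying \lemref{biconnected} to a neighbour of $v$ on the outer face, noting that this neighbour shares an inner face with $v$, so the uniqueness condition $|B\cap V(F)|=1$ forces $v\notin B$. Your write-up is a bit more explicit than the paper's one-sentence justification, but the idea is identical.
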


At this point we pause to sketch how \lemref{biconnected}
can already be used to give an upper-bound of 8 on the facial
nonrepetitive chromatic number of biconnected outerplane graphs.  For a
biconnected outerplane graph, $G$, we take a blocking set $B$ of $G$
using \lemref{biconnected}.  By \thmref{tree}, we can nonrepetitively
4-colour the tree $T=G-B$ using the colours $\{1,2,3,4\}$, so
what remains is to assign colours to the vertices in $B$.  To do this,
we use \thmref{cycle} to nonrepetitively 4-colour the cycle, $C$, that
contains the vertices of $B$ in the order they appear on the outer face
of $G$ using the colours $\{5,6,7,8\}$.  We claim that the resulting
8-colouring of $G$ is facially nonrepetitive.  No facial path on an
inner face is coloured repetitively since each such facial path is also
either present in the tree $T$ or it contains exactly one vertex of $B$.
No facial path on the outer face is coloured repetitively since it is
obtained by interleaving a nonrepetitive sequence of colours in $C$
with nonrepetitive sequences taken from $T$; by \lemref{interleave},
a sequence obtained in this way is nonrepetitive.

In \appref{biconnected}, we show that the preceding argument can be
improved to give a bound of 7 on the facial nonrepetitive chromatic
number of biconnected outerplane graphs. This is just a matter of adding
vertices to the blocking set so that the cycle $C$ does not have length
in $\{5,7,9,10,14,17\}$, so that it can be nonrepetitively 3-coloured.

Finally, we remind the reader that, although \lemref{biconnected} and
\corref{biconnected-out} provide blocking sets that include only one
vertex on each inner face, not all blocking sets have this property.
It is helpful to keep this in mind in the next section.

\subsection{The Blocking Graph}

The \emph{blocking graph} of $G$ for a blocking set $B$ is the graph,
denoted by $\block_B(G)$, whose vertex set is $B$ and whose edges are
defined as follows: Begin with the (closed) facial walk $W$ on the outer
face of $G$. Remove every vertex not in $B$ from $W$ to obtain a cyclic
sequence $W'$ of vertices in $B$. For each consecutive pair of vertices
$uw$ in $W'$ we add an edge $uw$ to $\block_B(G)$.  This naturally
defines the embedding of $\block_B(G)$. See \figref{blocking-graph}
for an example.   Note that $\block_B(G)$ is a plane graph that is not
necessarily simple; it may contain parallel edges (cycles of length
two) and self-loops (cycles of length one).

\begin{figure}
  \begin{center}
     \includegraphics{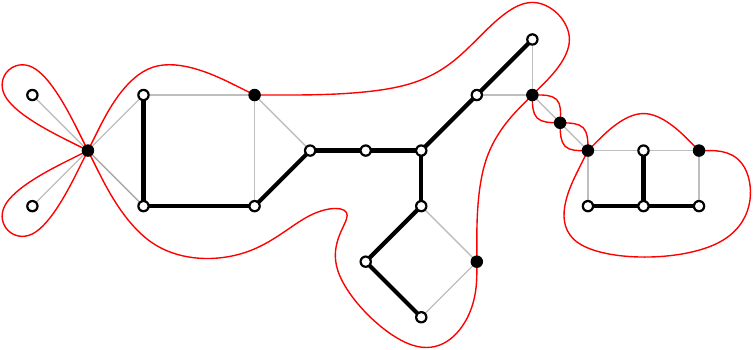}
  \end{center}
  \caption{The blocking graph (curved edges in red) associated with a
           blocking set.}
  \figlabel{blocking-graph}
\end{figure}

The fact that a blocking set does not contain both endpoints of any
chord of $G$ (\obsref{no-chords}) implies the following observation:

\begin{obs}\obslabel{cactus}
   For every outerplane graph $G$ and any blocking set $B$ of $G$,
   the blocking graph $\block_B(G)$ is a bridgeless cactus graph.
\end{obs}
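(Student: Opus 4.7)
The plan is to verify the three properties in the statement---outerplane, chord-free, and bridgeless---directly from the construction of $\block_B(G)$ out of the cyclic sequence $W'$ together with the embedding it inherits from the outer facial walk $W$ of $G$.

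First I would make the inherited embedding of $\block_B(G)$ explicit: each vertex of $B$ stays where it was drawn in $G$, and each new edge $uw$ of $\block_B(G)$ is drawn along the arc of $W$ between the two consecutive occurrences of $B$-vertices that produced it. Since distinct such arcs have disjoint interiors and all lie on the boundary of $G$'s outer face, this gives a valid plane embedding whose outer facial walk is exactly $W'$. Because $W'$ visits every vertex of $\block_B(G)$ and, by construction, traverses every edge, every vertex and every edge of $\block_B(G)$ is incident to its outer face. That alone shows that $\block_B(G)$ is outerplane and chord-free, which are precisely the two defining properties of a cactus graph in this paper.

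Next I would establish bridgelessness by fixing an arbitrary edge $e=uw$ of $\block_B(G)$ and producing a walk from $w$ to $u$ in $\block_B(G)-e$. By the construction, $e$ arises from exactly one consecutive pair in $W'$, say at cyclic position $i$. Reading $W'$ cyclically starting just after position $i$ and continuing all the way around back to position $i$, one obtains a walk from $w$ to $u$ whose edges are precisely the consecutive pairs of $W'$ other than the one at position $i$; in particular, none of these edges is the specific edge $e$. Thus $u$ and $w$ remain connected in $\block_B(G)-e$, so $e$ is not a bridge. The degenerate case $u=w$, when $e$ is a self-loop, is trivial.

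The only real pitfall---and, I think, the main thing to be careful about---is to treat $\block_B(G)$ consistently as a multigraph in which each occurrence of a consecutive pair in $W'$ contributes its own copy of an edge. Under this convention, ``removing $e$'' removes exactly one copy, leaves any parallel copy of $uw$ arising from a different position in $W'$ intact, and the walk around the rest of $W'$ genuinely avoids the chosen copy. With this convention in place, neither \obsref{no-chords} nor \obsref{consecutive} needs to be invoked; the cactus and bridgeless properties both follow from the purely topological fact that $\block_B(G)$ is the edge multiset of a closed walk traced along the boundary of a single face of the plane.
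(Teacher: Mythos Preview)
Your argument is essentially correct and more detailed than what the paper offers.  The paper does not give a proof of this observation at all; it simply asserts, in the sentence preceding the statement, that it is a consequence of \obsref{no-chords} (that $B$ never contains both endpoints of a chord of $G$).  You instead argue directly from the construction: $\block_B(G)$ is the edge multiset of a single closed walk drawn along the boundary of one face, so every edge lies on the outer face (hence outerplane and chord-free) and the complement of any one edge in that closed walk still connects its endpoints (hence bridgeless).  This is a cleaner and more self-contained route; in particular, as you note, it does not actually require any special property of blocking sets, whereas the paper's pointer to \obsref{no-chords} leaves the reader to reconstruct an argument that is not obviously more direct.

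One small imprecision is worth flagging.  You assert that ``distinct such arcs have disjoint interiors.''  This is not literally true when $G$ has bridges or cut vertices not in $B$: a bridge of $G$ is traversed twice by the outer facial walk $W$, so two different arcs of $W'$ may run along the same edge of $G$, and a cut vertex of $G$ outside $B$ may lie in the interior of two different arcs.  The fix is routine---draw each edge of $\block_B(G)$ slightly offset into the unbounded outer face of $G$, so that the resulting curves are pairwise internally disjoint---and your later summary (``a closed walk traced along the boundary of a single face'') already captures the right picture.  With that adjustment your proof goes through, and your handling of the multigraph convention in the bridgeless argument is exactly what is needed.
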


\begin{obs}\obslabel{shit0}
  For every outerplane graph $G$, any blocking set $B$ of $G$, and any
  facial path $P$ on the outer face of  $G$, the subsequence of $P$
  containing only the vertices of $B$ is a (outer) facial path in
  $\block_B(G)$.
\end{obs}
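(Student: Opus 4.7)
The plan is to unwind the definition of $\block_B(G)$ and track what happens to $P$ under the vertex-deletion operation used in that definition. I would start by letting $W$ denote the outer facial walk of $G$ used in the construction of $\block_B(G)$, and let $W'$ be the cyclic sequence obtained from $W$ by deleting every vertex not in $B$. By the very definition of the blocking graph, $W'$ is the outer facial walk of $\block_B(G)$, and the edges of $\block_B(G)$ are precisely the edges joining consecutive vertices of $W'$.

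Write $P = v_0, v_1, \ldots, v_k$ as a contiguous subwalk of $W$ that is also a path in $G$, and let $P'$ be the subsequence of $P$ consisting of those $v_i$ that lie in $B$. The first thing to verify is that $P'$ is a walk in $\block_B(G)$. If $v_i, v_j \in B$ are consecutive in $P'$, then every vertex of $P$ strictly between them lies outside $B$; since $P$ is contiguous in $W$, the same holds inside $W$, so $v_i$ and $v_j$ become consecutive in $W'$ after the deletion, and hence $v_iv_j$ is an edge of $\block_B(G)$. Because the vertices of $P$ are distinct (it is a path in $G$), the vertices of $P'$ are distinct as well, so $P'$ is in fact a path.

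It then remains to observe that $P'$ is a contiguous subsequence of $W'$. This is immediate: the operation that produces $W'$ from $W$ is purely deletion, so the piece of $W$ corresponding to $P$ contributes exactly $P'$ as a contiguous block of $W'$. Combined with $P'$ being a path in $\block_B(G)$, this shows $P'$ is an outer-facial path of $\block_B(G)$.

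There is no real obstacle in this argument; the statement is essentially bookkeeping that follows the construction of $\block_B(G)$. The only point requiring a moment of care is the inference that ``consecutive in $P'$'' implies ``consecutive in $W'$,'' which uses precisely the contiguity of $P$ inside $W$ together with the fact that producing $W'$ never rearranges vertices but only drops those outside $B$.
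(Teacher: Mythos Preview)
Your argument is correct. The paper states this as an observation and gives no proof at all; it treats the claim as immediate from the construction of $\block_B(G)$. Your write-up is a faithful unpacking of exactly that construction: $W'$ is obtained from $W$ by pure deletion, $P$ sits as a contiguous block of $W$, and therefore $P'$ sits as a contiguous block of $W'$, with distinctness of vertices inherited from $P$. There is nothing to add---you have supplied the details the paper leaves implicit, and by essentially the only possible route.
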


\begin{obs}\obslabel{shit}
  For every outerplane graph $G$, any blocking set $B$ of $G$ and any
  inner face $F$ of $G$, $G[V(F)\cap B]$ is a non-empty path that is a
  facial path (on some inner face)  in $\block_B(G)$.
\end{obs}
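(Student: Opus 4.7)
The plan is to split the claim into two parts: (i) $G[V(F)\cap B]$ equals a non-empty path, and (ii) this path is a facial path on some inner face of $\block_B(G)$.

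For (i), I would begin by invoking \obsref{consecutive} to obtain that $V(F)\cap B$ is non-empty and appears in a consecutive order $b_1,b_2,\ldots,b_k$ along the boundary cycle $C_F$ of $F$. Each edge $b_ib_{i+1}$ is then an edge of $C_F$ with both endpoints in $B$, and by \obsref{no-chords} it is not a chord of $G$, so it lies on the outer cycle $C_H$ of the 2-connected component $H$ of $G$ containing $F$. Hence $b_1,\ldots,b_k$ occur as consecutive vertices on $C_H$ as well. To exclude additional edges in $G[V(F)\cap B]$, I would observe that any putative edge $b_ib_j$ with $|i-j|\ge 2$ would connect two non-consecutive vertices of $C_H$ and therefore could not lie on $C_H$; it would have to be a chord of $G$ with both endpoints in $B$, again contradicting \obsref{no-chords}.

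For (ii), since each edge $b_ib_{i+1}$ lies on the outer face of $G$, the sequence $b_1,\ldots,b_k$ is a contiguous block of the outer facial walk $W$ of $G$. Deleting non-$B$ vertices from $W$ preserves contiguity, so $b_1,\ldots,b_k$ is also a contiguous block of $W'$, and in particular every $b_ib_{i+1}$ is an edge of $\block_B(G)$. By \obsref{cactus}, $\block_B(G)$ is a bridgeless cactus, and the block $H$ contributes a single cycle $D_H$ to this cactus whose vertices are $V(H)\cap B$ in the cyclic order they appear on $C_H$. Since $b_1,\ldots,b_k$ are consecutive on $C_H$, they lie consecutively on $D_H$, giving the desired facial path on the inner face of $\block_B(G)$ bounded by $D_H$.

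The main obstacle is the structural fact used in part (i): that an edge of $G$ between two vertices of a single 2-connected component $H$ either lies on the outer cycle $C_H$ of $H$ or is a chord of $G$. This is what allows one to convert non-consecutiveness on $C_F$ into non-consecutiveness on $C_H$ and close the contradiction. The remainder is essentially bookkeeping along $W$ and the cactus structure of $\block_B(G)$; by \lemref{non-simple} we may restrict to simple graphs, which eliminates any complications from parallel edges or loops.
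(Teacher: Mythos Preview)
Your argument for part~(i) is sound. For part~(ii), however, the step ``since each edge $b_ib_{i+1}$ lies on the outer face of $G$, the sequence $b_1,\ldots,b_k$ is a contiguous block of the outer facial walk $W$ of $G$'' is false in general. If some interior vertex $b_i$ (with $1<i<k$) is a cut vertex of $G$, the outer facial walk may leave $H$ at $b_i$, traverse another $2$-connected component, and return to $b_i$ before continuing to $b_{i+1}$. Concretely, let $H$ be the $5$-cycle $v,a,b,c,d$, attach a triangle on $b,e,f$ at $b$, and take $B=\{a,b,c\}$ (one checks this is a blocking set). Then $W=v,a,b,e,f,b,c,d$ cyclically, so $a,b,c$ is not contiguous in $W$, nor in $W'=a,b,b,c$. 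The observation still holds here ($a,b,c$ is a facial path on the inner $3$-cycle of $\block_B(G)$), but not via your contiguity argument.

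The fallback claim that ``the block $H$ contributes a single cycle $D_H$ whose vertices are $V(H)\cap B$'' is also not true as stated. If a cut vertex $x\notin B$ lies on $C_H$ strictly between two consecutive $B$-vertices of $H$, and the component attached at $x$ contains vertices of $B$, those vertices get spliced into the \emph{same} cycle of $\block_B(G)$ as the $B$-vertices of $H$. For instance, take $H$ the $5$-cycle $v,a,b,c,d$, attach a triangle on $a,e,f$ at $a$, and set $B=\{b,c,e\}$; then $\block_B(G)$ is a single $3$-cycle on $e,b,c$, and there is no cycle on $V(H)\cap B=\{b,c\}$ alone. What you actually need for~(ii) is that the edges $b_ib_{i+1}$ of $\block_B(G)$ all lie on one cycle of the cactus and appear consecutively there. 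This is true, but it requires tracking how the portion of $W$ that traces $C_H$ is interlaced with detours at cut vertices, and then arguing that in $\block_B(G)$ each such detour becomes a sub-cactus hanging off a single cycle that still contains $b_1,\ldots,b_k$ consecutively. (The paper records this as an observation without proof, so there is no argument in the paper to compare against.)
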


In the previous section, we sketched a proof of an upper bound of 8 on
the facial chromatic number of biconnected outerplane graphs.  This proof
works by nonrepetitively 4-colouring the tree, $G-B$, obtained after
removing the blocking set and then nonrepetitively 4-colouring a cycle,
$C$, of vertices in the blocking set.  This cycle, $C$, is actually
the blocking graph, $\block_B(G)$.  The following lemma shows that
this strategy generalizes to the situation where we can find a facial
nonrepetitive colouring of $\block_B(G)$ with few colours.

\begin{lem}\lemlabel{hitting_plus_four}\lemlabel{k-plus-four}
  Let $G$ be an outerplane graph and $B$ be a blocking set of $G$. If
  there exists a facial nonrepetitive $k$-colouring of (the outer 
  face of) $\block_{B}(G)$, then there exists a facial nonrepetitive
  $(4+k)$-colouring of $G$.
\end{lem}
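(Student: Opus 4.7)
The plan is to combine a nonrepetitive $4$-colouring of the forest $T := G-B$ (using colours $\{1,2,3,4\}$) with the hypothesized facial nonrepetitive $k$-colouring of the outer face of $\block_B(G)$ (using colours $\{5,\ldots,k+4\}$) to obtain the desired $(4+k)$-colouring of $G$. Observe first that $T$ is indeed a forest: by definition of a blocking set, every $2$-connected component of $G$ becomes a tree after removing $B$, while every edge outside the $2$-connected components is a bridge, so no cycle can survive in $G-B$; hence \thmref{tree} supplies the required $4$-colouring of $T$.

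To verify that the combined colouring is facially nonrepetitive I would take an arbitrary facial path $P$ of $G$ and split into cases. If $P$ lies on the outer face, the subsequence of $P$ consisting of its $B$-vertices is, by \obsref{shit0}, an outer-facial path of $\block_B(G)$ and is therefore coloured nonrepetitively. The maximal blocks of non-$B$ vertices of $P$ between consecutive $B$-vertices are paths in $T$ (consecutive non-$B$ vertices of $P$ remain adjacent in $T$) and are coloured nonrepetitively as well. Since the two palettes are disjoint, \lemref{interleave} assembles these pieces into a nonrepetitive colouring of $P$.

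If instead $P$ lies on an inner face $F$, then \obsref{consecutive} tells us that $V(F)\cap B$ forms a single consecutive arc on $F$, so the $B$-vertices of $P$ appear as a single contiguous block and $P$ decomposes as $A_0, b_1,\ldots,b_s, A_1$, where $A_0$ and $A_1$ are (possibly empty) sub-paths in $T$. The main obstacle here is that the hypothesis only gives nonrepetitivity along outer-facial paths of $\block_B(G)$, whereas \obsref{shit} only identifies $G[V(F)\cap B]$ as an inner-facial path of $\block_B(G)$, and inner-facial paths in a cactus are in general not outer-facial paths. The key to breaking the deadlock is \obsref{no-chords}: no chord of $G$ has both endpoints in $B$, and hence every edge of $G[V(F)\cap B]$ must be an outer edge of $G$. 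Consequently the vertices $b_1,\ldots,b_s$, being joined consecutively along $F$ by outer edges, appear consecutively and in the same order along the outer face walk of $G$; removing the non-$B$ vertices to form $W'$ leaves them consecutive, so $b_1,\ldots,b_s$ is in fact an outer-facial path of $\block_B(G)$ and is coloured nonrepetitively by the hypothesis. A second application of \lemref{interleave} with the disjoint palettes then gives that $P$ is nonrepetitively coloured.
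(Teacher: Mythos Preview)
Your approach is essentially the paper's, and your justification via \obsref{no-chords} that the $B$-block on an inner face is actually an \emph{outer}-facial path of $\block_B(G)$ is a nice detail that the paper's own proof glosses over. However, there is a genuine gap in your inner-face analysis.

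You claim that since $V(F)\cap B$ is a single consecutive arc on the cycle $F$ (by \obsref{consecutive}), the $B$-vertices of any facial path $P$ on $F$ must form a single contiguous block in $P$. This inference is false. Take $F$ to be the $6$-cycle $1,2,3,4,5,6$ with $V(F)\cap B=\{1,2,3\}$; the facial path $P=3,4,5,6,1,2$ has its $B$-vertices split as $3$ at the start and $1,2$ at the end, giving the pattern $B_0,A_1,B_1$ rather than $A_0,B_1,A_1$. In general a path on $F$ may start inside the $B$-arc, traverse the entire non-$B$ arc, and re-enter the $B$-arc from the other side. The paper's proof handles exactly this as a second sub-case: here $A_1$ is a non-empty path in $G-B$ and each of $B_0,B_1$ is (by your own \obsref{no-chords} argument) an outer-facial path in $\block_B(G)$, so \lemref{interleave} applies with the roles of the two alphabets swapped. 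Adding this missing case completes your argument.
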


\begin{proof}
By \thmref{tree} we can colour $G-B$ nonrepetitively using colours
$\{1,2,3,4\}$ and, by assumption, we can facially nonrepetitively colour
$\block_{B}(G)$ with colours $\{5,\dots, k+4\}$. These two colourings
define a colouring of $G$ that we now show is facially nonrepetitive.

Let $P$ be a facial path in $G$. If $P$ is a facial path of
$\block_{B}(G)$ or a path in $G-B$ then there is nothing to prove.

Otherwise consider first the case that $P$ is a path on an inner
face $F$ of $G$.   There are two cases to consider:
\begin{enumerate}
\item The colouring of $P$ is of the form $A_0,B_1,A_1$, where
  $A_0$ and $A_1$ are obtained from (possibly-empty) paths in $G-B$
  and $B_1$ is obtained from a non-empty outer-facial path in $\block_B(G)$
  (by \obsref{shit}).  \lemref{interleave} therefore implies that the
  colour sequence $A_0,B_1,A_1$ is nonrepetitive.

\item The colouring of $P$ is of the form $B_0,A_1,B_1$, where $A_1$
  is obtained from a non-empty path in $G-B$ and $B_0$ and $B_1$ are
  (possibly empty) outer-facial paths in $\block_B(G)$ (by \obsref{shit}).
  Again, \lemref{interleave} implies that the resulting colour sequence
  is nonrepetitive.
\end{enumerate}

Finally, consider the case where $P$ is a facial path on the outer face
of $G$.  In this case, the colour sequence obtained from $P$ is of the
form $A_0,B_1,A_1,\ldots,B_k,A_k$ where each $A_i$ is obtained from a
(possibly empty) path in $G-B$ and $B_1,\ldots,B_k$ is obtained from
a (outer) facial path in $\block_B(G)$ (by \obsref{shit0}).  Again,
\lemref{interleave} implies that the resulting colour sequence is
nonrepetitive.
\end{proof}

\subsection{Colouring Even Cactus Graphs}

We now show how to colour the blocking graph---a cactus graph---of an
outerplane graph.  By Lemma \ref{lem:hitting_plus_four}, if we can find
a facial nonrepetitive $k$-colouring of any cactus graph, we can get a
facial nonrepetitive $k+4$-colouring of any outerplane graph.

Recall that the best known upper bound for the facial Thue chromatic
number of outerplane graphs is 12, which is the bound for the Thue
chromatic number \cite{barat2007square, kundgen2008nonrepetitive}. Thus,
to improve this bound, we need to find a facial nonrepetitive 7-colouring
of the blocking graph. We have been unable to do this unless all cycles of
the blocking graph are even.  We will eventually address this limitation
in \secref{even-blocking-graph} by proving the existence of a blocking
set $B$ such that $\block_B(G)$ has no odd cycles.

For any graph, $G$, a \emph{levelling} of $G$ is a function
$\lambda\colon V(G)\to \{0, 1, 2,\dots\}$ such that for each
$uv\in E(G)$, $|\lambda(u)-\lambda(v)|\leq 1$. The
level pattern of a path $v_1,\ldots,v_k$ is the sequence
$\lambda(v_1),\lambda(v_2),\ldots,\lambda(v_k)$.

\begin{lem}[K{\"u}ndgen and Pelsmajer \cite{kundgen2008nonrepetitive}] \lemlabel{level_pattern_palindrome_free}
 Let $G$ be a graph and $\lambda\colon V(G)\to \{0, 1,
 2,\dots\}$ be a levelling of $G$. Let $S=s_0,s_1,\ldots,s_m$
 be a nonrepetitive palindrome-free sequence on an alphabet
 $\mathcal{A}$ with $m=\max\{\lambda(v) \;|\; v \in V(G)\}$ and
 $c : V(G) \rightarrow \mathcal{A}$ be a colouring of $G$ defined
 as $c(v)=s_{\lambda(v)}$. If a path $P=P_1, P_2$ with
 $|P_1|=|P_2|$ in $G$ is repetitively coloured under $c$, then $P_1$
 and $P_2$ have the same level pattern.
\end{lem}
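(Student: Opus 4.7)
The plan is to set $a_i = \lambda(u_i)$ and $b_i = \lambda(v_i)$, where $P_1 = u_1,\ldots,u_r$ and $P_2 = v_1,\ldots,v_r$, so that the repetitive-colouring hypothesis reads $s_{a_i} = s_{b_i}$ for every $i \in \{1,\ldots,r\}$, and to show that $a_i = b_i$ for every such $i$. A first reduction is a local case analysis on the pair $(\Delta a_i, \Delta b_i) := (a_{i+1}-a_i, b_{i+1}-b_i) \in \{-1,0,1\}^2$. Since consecutive letters of $S$ differ (by nonrepetitiveness), $s_{a_i} = s_{b_i}$ forces $|a_i - b_i| \ne 1$; comparing $s_{a_{i+1}} = s_{b_{i+1}}$ with $s_{a_i} = s_{b_i}$ similarly forbids the situations where exactly one of $\Delta a_i, \Delta b_i$ is zero (in those cases one would deduce $s_{b_i} = s_{b_i \pm 1}$, again contradicting nonrepetitiveness). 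Together these imply that the difference $d_i := a_i - b_i$ changes between consecutive indices only by $0$ or $\pm 2$.

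Next, I would use palindrome-freeness to handle the situation where $d$ mixes the value $0$ with a nonzero value. Because $d$ never equals $\pm 1$ and changes by at most $2$, if $d$ took both $0$ and some nonzero value then some pair $(d_i, d_{i+1})$ would be $(0, \pm 2)$ or $(\pm 2, 0)$. Unfolding such a step --- say $d_i = 0$ with $\Delta a_i = 1$ and $\Delta b_i = -1$ --- gives $a_{i+1} = a_i+1$ and $b_{i+1} = a_i - 1$; then $s_{a_{i+1}} = s_{b_{i+1}}$ becomes $s_{a_i+1} = s_{a_i-1}$, so the three-letter block $s_{a_i-1}, s_{a_i}, s_{a_i+1}$ of $S$ is a palindrome, contradicting palindrome-freeness. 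Therefore either $d \equiv 0$ (which is exactly the conclusion we want) or $d_i \ne 0$ for every $i$.

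The main obstacle is the remaining case, $d_i \ne 0$ throughout. Since $d_i$ cannot cross zero, $d_i$ has constant sign and $|d_i| \ge 2$; without loss of generality $d_i \ge 2$. Here I would invoke the fact that $P = P_1 P_2$ is a single path, so the edge $u_rv_1$ gives $|a_r - b_1| \le 1$. If $d_i$ is \emph{constant} equal to $d \ge 2$, then $a$ and $b$ are parallel walks and $s_\ell = s_{\ell + d}$ holds for every $\ell$ in the interval of levels visited by $b$: an interval of length at least $d$ produces a length-$2d$ repetition in $S$ and contradicts nonrepetitiveness, while a shorter interval forces $|a_r - b_1| \ge 2$, contradicting the path constraint. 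If $d_i$ is not constant, an analogous analysis at the first index where $d$ changes --- combining the parallel-walk equalities from the constant portion with the extra symbol equality forced at the change --- once more yields either a palindromic factor or a repetition of $S$, giving the desired contradiction. The bulk of the work in a fully written proof goes into making this last non-constant-$d$ sub-case completely rigorous, carefully accounting for the ranges visited by the two walks and the pair equalities in $S$ they encode.
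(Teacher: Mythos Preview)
The paper does not prove this lemma; it is quoted from K\"undgen and Pelsmajer \cite{kundgen2008nonrepetitive} without proof, so there is nothing in the paper to compare your argument against.

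On the substance of your sketch: your reductions through the palindrome step are correct and cleanly isolate the case where $d_i:=a_i-b_i$ is never zero. However, your treatment of the remaining ``non-constant $d$'' sub-case is genuinely incomplete, and the vague appeal to ``an analogous analysis at the first index where $d$ changes'' does not obviously close. The efficient way to finish is to notice that the colour sequence actually determines the \emph{shape} of the level walk up to a global sign: zero steps are exactly the indices with $c_i=c_{i+1}$, and for two consecutive nonzero steps, whether they have the same or opposite sign is determined by whether $c_{i-1},c_i,c_{i+1}$ are all distinct or form a pattern $xyx$ (using that $s_{j-1},s_j,s_{j+1}$ are pairwise distinct). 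Since $a$ and $b$ produce the same colour sequence, either $a_i-b_i$ is constant (your ``constant $d$'' case, which you handle correctly) or $a_i+b_i$ is constant, say equal to $K$. In the latter case $d_i=2a_i-K$, and $d_i\ge 2$ forces $a_i>K/2$; combined with the edge constraint $|a_r-b_1|=|a_r+a_1-K|\le 1$ this pins down $K$ odd and $\min_i a_i=(K+1)/2$, whence the equalities $s_{a_i}=s_{K-a_i}$ cover the full interval $[K-\beta,\beta]$ (where $\beta=\max_i a_i$) and exhibit an explicit palindrome of length $\ge 3$ in $S$. That single structural observation replaces the open-ended case analysis you allude to.
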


\begin{lem}\lemlabel{cactus}
  For every cactus graph, $G$, with no odd cycles (and therefore, for
  every blocking graph with no odd cycles), $\pi_f(G)\le 7$.
\end{lem}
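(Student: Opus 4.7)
The plan is to combine the K\"undgen--Pelsmajer levelling technique (\lemref{level_pattern_palindrome_free}) with an auxiliary colouring tailored to the cactus's block-cut structure. First I would root $G$ at some vertex $r$ and compute BFS levels $\lambda\colon V(G)\to\{0,1,\ldots,m\}$. Because every cycle of $G$ has even length, on each cycle the BFS levels form a symmetric ``hill'': the cycle vertex closest to $r$ sits at some level $\ell_0$, its antipodal vertex sits at $\ell_0 + |C|/2$, and the levels rise and fall by unit steps between them. This is the key structural consequence of the even-cycle hypothesis: on any sub-path of a single cycle the level pattern has a single peak, so no sub-segment $P_1P_2$ with $|P_1|=|P_2|$ has $P_1$ and $P_2$ with identical level patterns.

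Next I would take a palindrome-free nonrepetitive sequence $s=s_0,s_1,\ldots,s_m$ over a four-letter alphabet $\{1,2,3,4\}$ (such sequences exist for every length) and colour $c_1(v)=s_{\lambda(v)}$. By \lemref{level_pattern_palindrome_free}, if $c_1$ is repetitive on a facial path $P$, then $P$ contains a sub-segment $P_1P_2$ with $|P_1|=|P_2|$ whose two halves share the same level pattern. The first step rules this out for every inner-facial path, and for every outer-facial path that stays within a single cycle. The remaining danger is outer-facial paths that cross cut vertices, where the level pattern can oscillate (for instance $\ell,\ell+1,\ell,\ell+1$) as the walk enters one cycle, returns to a cut vertex, and enters another; I would use the block-cut tree of $G$, together with the even-cycle constraint, to pin down exactly when such oscillations can arise.

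To break these remaining potential repetitions, I would overlay a secondary colouring using three additional colours $\{5,6,7\}$, applied to a carefully chosen set of representative vertices---for example, one vertex per cycle, drawn from a nonrepetitive colouring of the weak dual tree (which is a tree and so is $4$-coloured nonrepetitively by \thmref{tree}, and which can be refined to three colours in the present setting). The secondary colours would override $c_1$ on the chosen vertices so that every potential level-pattern repetition flagged in the previous step contains at least one override that breaks it. The resulting palette has at most $4+3=7$ colours.

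The hard part will be this last step: showing that the secondary colouring can simultaneously break every cross-cycle level-pattern repetition while introducing no new repetitions through its own colour choices. This will require a careful case analysis of how the block-cut tree of $G$ interacts with the BFS levelling, together with the observation that adjacent cycles in the block-cut tree are always separated by a cut vertex at which the level pattern must bottom out, limiting how many overrides are needed along any single facial path.
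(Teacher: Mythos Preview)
Your overall architecture matches the paper's: a BFS levelling coloured by a palindrome-free nonrepetitive sequence on four colours, together with a secondary three-colour ``override'' on one representative vertex per cycle, glued by \lemref{interleave}. Where you diverge---and where there is a real gap---is in the secondary colouring.

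You propose to 3-colour the representatives by pulling colours back from a nonrepetitive colouring of the weak dual (or block--cut) tree, asserting this ``can be refined to three colours in the present setting.'' That assertion is unsupported: trees in general need four nonrepetitive colours (\thmref{tree} is tight), and the weak dual of a cactus has no edges at all, so it carries no useful structure. The paper avoids this entirely. It chooses each representative to be the \emph{degree-2} vertex at the deepest level of its cycle and then connects two representatives whenever they are consecutive (ignoring non-representatives) along the outer facial walk. Because every representative has degree~2 in $G$, it occurs exactly once on the outer walk, so the resulting auxiliary graph $H$ has maximum degree~2: it is a single cycle (if $G$ has no degree-1 vertices) or a disjoint union of paths. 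Paths and cycles are what admit nonrepetitive 3-colourings, via Thue and Currie respectively---and even here one must perturb $|V(H)|$ to avoid the exceptional cycle lengths $\{5,7,9,10,14,17\}$ from \thmref{cycle}, a step your plan does not anticipate.

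A second, smaller gap: your level-pattern analysis for outer-facial paths is too coarse. It is not enough that cut vertices make the levels ``bottom out''; you need the sharper claim that on any outer-facial path avoiding $H$ the level sequence is strictly decreasing then strictly increasing (no $i,i+1,i$ and no $i,i$), which is exactly what forces the peak vertex into $H$ and makes \lemref{level_pattern_palindrome_free} bite. Once you have that claim and the correct path/cycle structure on $H$, the interleaving argument goes through as you outline.
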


\begin{proof}
 By \lemref{non-simple}, we may assume that $G$ is simple.  We may also
 assume that $G$ is connected as this does not affect its nonrepetitive
 chromatic number. Also, assume that $G$ is neither a cycle nor a
 tree since $\pi_f(G) \leq 4 < 7$ for both these classes of graphs.
 If there exists a vertex $v$ of $G$ such that $\deg_G(v)=1$, then let
 the \emph{root} $r$ of $G$ be $v$. Otherwise, let $r$ be any vertex
 of $G$ of degree at least $3$. Let $\lambda$ be a levelling of $G$
 where $\lambda(v)$ is the distance in $G$ from $r$ to $v$. Let $H$
 be a graph that contains all vertices $v\in V(G)$ such that
 \begin{enumerate}
  \item $v$ is on a cycle $C$ of $G$,
  \item $\lambda(v)=\max_{u \in C} \lambda(u)$ and
  \item $\deg_G(v)=2$.
 \end{enumerate} 
 In other words, $H$ contains the vertices of degree 2 that are on the
 deepest level of a cycle (see \figref{cactus-example}). Notice that
 since every cycle of $G$ is even, there is at most one vertex of $H$
 in each cycle of $G$.

 \begin{figure}
    \begin{center}
        \includegraphics{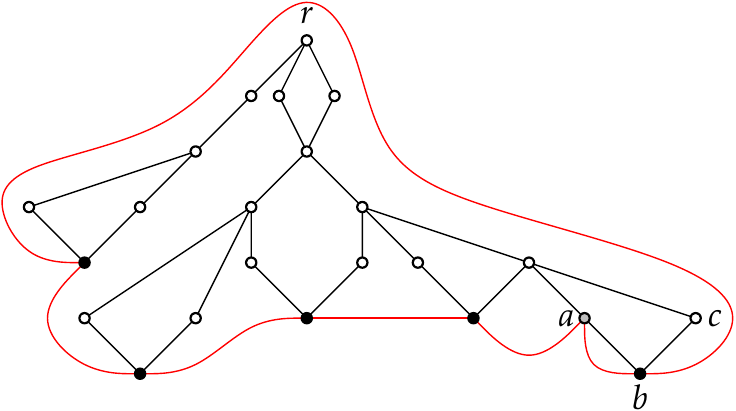}
    \end{center}
    \caption{The graph $H$ (black vertices and grey vertex) obtained
        from a cactus graph with no odd cycles.  The vertex $a$ is added
        in the final step so that $H$ is not a cycle of length 5.}
    \figlabel{cactus-example}
 \end{figure}

If $\deg_G(r)\ne 1$, there must exist at least one face $F^*$ of $G$ such
that exactly one vertex $v$ of $F^*$ has degree greater than two. From
our choice of $r$, it follows that $\lambda(v)$ is the minimum over all
vertices in $V(F^*)$.  Since $|V(F^*)|\ge 4$, $F^*$ has three consecutive
degree-2 vertices $a$, $b$, and $c$, such that $b\in V(H)$. If $|V(H)|\in
\{5,7,10,14,17\}$, we add $a$ to $V(H)$. If $|V(H)|=9$, we add both $a$
and $c$ to $V(H)$. Notice that now, either $\deg_G(r)=1$ or $|V(H)|\notin
\{5,7,9,10,14,17\}$.

We now define the edge set $E(H)$ of $H$.  For each $u,v\in V(H)$,
we add the edge $uv$ to $E(H)$ if there is a facial path on the outer
face of $G$ with endpoints $u$ and $v$ that does not contain any other
vertices in $V(H)$.  Note that $H$ is either a cycle or a forest of
paths. It can only be a cycle if $G$ has no vertices of degree 1,
in which case, $\deg_G(r)\neq 1$.  In this case, our choice of $V(H)$
ensures that the length of this cycle is not in  $\{5,7,9,10,14,17\}$.
This implies that $H$ can be nonrepetitively coloured using the colour set
$\mathcal{B}=\{1,2,3\}$, either by using the result of Thue \cite{thue1906uber}
or Currie (\thmref{cycle}).

To colour the remaining vertices of $G$, let $h=\max_{v \in
V(G)} \lambda(v)$ and $S=s_0,s_1,\ldots,s_h$ be a palindrome-free
nonrepetitive sequence on $\mathcal{A}=\{4,5,6,7\}$.  (A nonrepetitive
palindrome-free sequence can be constructed from any ternary
nonrepetitive sequence by adding a fourth symbol between blocks of size
2 \cite{brevsar2007nonrepetitive}.) Then, each vertex $v\in V(G)\setminus
V(H)$ is assigned the colour $s_{\lambda(v)}$.

We will now show that the resulting 7-colouring of $G$ is a facial
nonrepetitive colouring. Suppose that this is not the case. Thus,
there exists a path $P=P_1,P_2$ such that the colour sequence $S$
corresponding to vertices of $P$ is a repetition. Let us first suppose
that $P$ is on the outer face of $G$. We will need the following claim:

 \begin{clm}\clmlabel{strictly_inc_dec}
    Let $P$ be a path on the outer face of $G$ such that $V(P) \cap V(H)
    = \emptyset$. The level sequence $L$ corresponding to vertices of
    $P$ must be strictly decreasing, strictly increasing, or strictly
    decreasing then strictly increasing.
 \end{clm}
 \begin{proof}[Proof of \clmref{strictly_inc_dec}]
   Suppose that this is not the case. Then $L$ cannot contain two
   consecutive elements of the form $i,i$ as this can only correspond
   to an odd cycle of $G$, but all cycles of $G$ are even. Thus, $L$
   must contain a block of the form $i,i+1,i$. Since $P$ is on the outer
   face, we must have that the vertex $v$ corresponding to $i+1$ is the
   highest numbered vertex on some cycle $C$ and that $\deg_G(v)=2$. But
   in this case, $v$ must be in $H$, which is a contradiction.
 \end{proof}
 By Lemma \ref{lem:level_pattern_palindrome_free}, $P_1$ and $P_2$ have
 the same level pattern. However, if $V(P) \cap V(H) = \emptyset$ this
 is incompatible with \clmref{strictly_inc_dec}.  Thus, $P$ must contain
 vertices of $H$. Let $P_H=p_1,p_2,\ldots,p_k$ be the sequence of vertices
 of $V(P) \cap V(H)$ in the same order as in $P$. Notice that $P_H$ is
 a path in $H$.  Therefore, the colour sequence corresponding to $P_H$
 is nonrepetitive.  Now, observe that the colour sequence formed by $P$
 is of the form $A_0,B_1,A_1,B_\ell,A_\ell$ where $B_1,\ldots,B_\ell$
 is a non-repetitive sequence of colours from $\mathcal{B}=\{1,2,3\}$ and
 each $A_i$ is a non-repetitive sequence of colours from $\mathcal{A}=\{4,5,6,7\}$.
 Therefore, by \lemref{interleave}, $P$ is coloured nonrepetitively.

Thus, $P$ must be a facial path on some inner face $F$ of $G$.  If $V(P) \cap V(H) =
\emptyset$ then, by Lemma \ref{lem:level_pattern_palindrome_free},
$P_1$ and $P_2$ have the same level pattern.  No path on an even cycle
has such a pattern using the levelling $\lambda$.
Therefore, $V(P)\cap V(H)\neq\emptyset$, so $P$ contains 1, 2, or
3 vertices of $H$.  If $P$ is a repetition it must contain exactly
2 vertices of $H$, thus $P$ is a facial path on $F^*$ (since every
other inner face contributes at most one vertex to $V(H)$).  Reusing the
notation above, $P$ cannot contain $b$ since $b$ has a unique colour in
$V(F^*)$. Therefore, $P$ must contain $a$ and $c$ and, in fact, these are
the endpoints of $P$.  The colour sequence of $P$ must therefore be of
the form $xAx$ where $x\in\{1,2,3\}$ and $A$ is a non-empty sequence over
the alphabet $\{5,6,7,8\}$.  Such a sequence is not a repetition.
\end{proof}

\subsection{Making an Even Blocking Graph}
\seclabel{even-blocking-graph}

\begin{lem}\lemlabel{even-biconnected}
  For every biconnected outerplane graph, $G$, and any vertex $v\in V(G)$:
  \begin{itemize}
    \item $G$ has a blocking set $B$ such that $\block_B(G)$ is an even cycle 
       and $v\in B$; and
    \item $G$ has a blocking set $B$ such that $\block_B(G)$ is an even cycle 
       and $v\not\in B$.
  \end{itemize}
\end{lem}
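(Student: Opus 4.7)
The plan is to start with the blocking set produced by \lemref{biconnected} or \corref{biconnected-out} and, if needed, add one extra vertex to flip its parity. For the first assertion I would apply \lemref{biconnected} to get a blocking set $B_0$ with $v\in B_0$ and $|V(F)\cap B_0|=1$ for every inner face $F$; for the second I would apply \corref{biconnected-out} instead, obtaining such a $B_0$ with $v\notin B_0$. Because $G$ is biconnected, the outer facial walk is a simple cycle listing each vertex of $G$ exactly once, so the cyclic subsequence induced on $B_0$ has $|B_0|$ distinct entries and $\block_{B_0}(G)$ is a cycle (in the multigraph sense) of length exactly $|B_0|$: a self-loop if $|B_0|=1$, two parallel edges if $|B_0|=2$, and a simple $|B_0|$-cycle otherwise. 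In particular, whenever $|B_0|$ is even we necessarily have $|B_0|\ge 2$ and I would take $B=B_0$.

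When $|B_0|$ is odd I would flip the parity by adding a carefully chosen leaf of the tree $T=G-B_0$ to $B_0$. Assuming $G$ is simple (loops and parallel edges can be handled separately, since they do not alter the vertex set), every inner face satisfies $|V(F)|\ge 3$, and because $|V(F)\cap B_0|=1$ each inner face contributes at least two vertices to $V(T)$; hence $|V(T)|\ge 2$ and $T$ has at least two leaves. I would pick a leaf $u$ of $T$ with $u\ne v$: this is automatic for the first assertion, since $v\in B_0$ and so $v\notin V(T)$, and it is possible for the second assertion because at most one of the at-least-two leaves of $T$ equals $v$. I would then set $B=B_0\cup\{u\}$.

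Verifying that $B$ has the required properties is short. Since $u$ is a leaf of $T$, the graph $G-B=T-u$ is still a tree. For each inner face $F$ the set $V(F)\cap B$ consists of the unique $B_0$-vertex of $F$ and possibly $u$, so $|V(F)\cap B|\le 2\le |V(F)|-1$ and $V(F)\setminus B\ne\emptyset$. The required $v$ membership holds by the choice of $u$. Finally $|B|=|B_0|+1$ is even and at least $2$, so repeating the outer-walk analysis for $B$ gives that $\block_B(G)$ is an even cycle, as required.

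The main obstacle I anticipate is simultaneously satisfying three side-conditions when flipping the parity: $G-B$ must remain a tree, each inner face must retain a vertex outside $B$, and the required $v$ membership must be preserved. Choosing $u$ to be a leaf of $T$ takes care of the tree condition automatically; the face-coverage condition survives because $B_0$ already intersects each inner face in just one vertex and we are only adding one more vertex to $B$; and the $v$ condition is preserved by avoiding $v$ among the at-least-two leaves of $T$.
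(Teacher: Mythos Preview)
Your proof is correct and substantially simpler than the paper's. Both arguments begin identically—take $B_0$ from \lemref{biconnected} or \corref{biconnected-out} and, if $|B_0|$ is odd, enlarge it by one vertex—but they diverge in how that extra vertex is found. The paper performs a three-way case analysis on the ear structure of $G$: Case~1 treats an ear with at least four vertices, Case~2 a triangular ear whose chord has an endpoint already in $B_0$, and Case~3 (the longest) handles the remaining situation by locating a suitable face through the weak dual and then arguing, with some effort, that the chosen vertex $y$ is a leaf of $G-B_0$. You bypass all of this by observing directly that \emph{any} leaf $u\neq v$ of the tree $T=G-B_0$ works: deleting a leaf keeps $T-u$ a tree, and since $|V(F)\cap B_0|=1$ for every inner face $F$, the enlarged set meets each face in at most two vertices, strictly fewer than $|V(F)|\ge 3$. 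In retrospect, the vertices produced by the paper's three cases are all leaves of $T$; the paper simply never abstracts this out and instead verifies the required properties case by case.

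Your route is not only shorter but also yields the strengthening recorded in \remref{bw} with no extra work: to force $b\in B$ and $a\notin B$ for a given outer edge $ab$, start with $b\in B_0$ via \lemref{biconnected}, note that $a\notin B_0$ because $a$ and $b$ lie on a common inner face, and then avoid $a$ when picking the leaf (possible since $T$ has at least two leaves). One small point worth tightening: your parenthetical about non-simple $G$ is a little loose, but the paper's own proof implicitly assumes simplicity as well (it invokes ears, defined only for simple outerplane graphs), and the lemma is only ever applied, via \lemref{even-bridgeless} and \lemref{even}, to simple graphs—so this is harmless.
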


\begin{proof}
  We first obtain a blocking set $B'$ that contains or does not
  contain $v$, as appropriate, by applying \lemref{biconnected} or
  \corref{biconnected-out}. Recall that $B'$ contains exactly one vertex
  on each inner face of $G$.  It is simple to verify that $\block_{B'}(G)$
  is a cycle;  if it is an even cycle, then we are done, so we may assume
  that $\block_{B'}(G)$ is an odd cycle.

  If $G$ has only one inner face, then $B'$ contains one vertex,
  $u$, on this face and $\block_{B'}(G)$ is an odd cycle (of length 1).
  In this case, we can select the neighbour, $w$, of $u$ such that
  $w\neq v$ and let $B=B'\cup\{w\}$.  It is easy to verify that $B$ is
  a blocking set, and either includes or excludes $v$, as appropriate,
  and that $\block_B(G)$ is a cycle of length 2.

  Thus, we may assume that $G$ has at least two inner faces and we
  consider several cases:
 
  \begin{enumerate}
  \item If $G$ contains an ear, $F$, with four or more vertices such that
  either $v\not\in V(F)$ or $v$ is one of the endpoints of the chord of
  $F$. There is exactly one vertex $x\in B'$ on the face $F$.  Let $y$
  be a neighbour of $x$ on $F$ such that $y$ is not on the chord of $F$
  (so $y$ has degree 2). Such a $y$ exists since $F$ has at least four
  vertices.  Set $B=B'\cup \{y\}$.  Now $|B|$ is even, so $\block_B(G)$
  is an even cycle.  Furthermore, $G-B'$ is a tree and $y$ is a leaf in
  this tree, so $G-B$ is also a tree.  Finally, by choice, $B$ contains
  $v$ if and only if $B'$ contains $v$, so $B$ satisifies the conditions
  of the lemma.

  \item Next, consider the case where $G$ contains a triangular ear, $F$,
  such that one of the endpoints of the chord of $F$ is in $B'$ and $v$
  is not the degree 2 vertex, $y$, of $F$.  By the same argument as above,
  $B=B'\cup\{y\}$ satisfies the conditions of the lemma.

  \item Refer to \figref{toughie}.  For an edge $uw\in E(G)$, let
  $V_{uw}'$ and $V_{uw}''$ be the two (possibly empty) sets of vertices
  in the (at most) two connected components of $G-\{u,w\}$.  Let
  $G_{uw}'=G[V_{uw}'\cup\{u,w\}]$ and $G_{uw}''=G[V_{uw}''\cup\{u,w\}]$.
  If neither of the two previous  cases applies, then there exists an
  edge $uw$ of $G$ such that $v\not\in V_{uw}'$ and the weak dual of
  $G_{uw}'$ is a star whose central vertex is the face, $F_{uw}$ incident
  on $uw$.  

  \begin{figure}
    \begin{center}
       \includegraphics{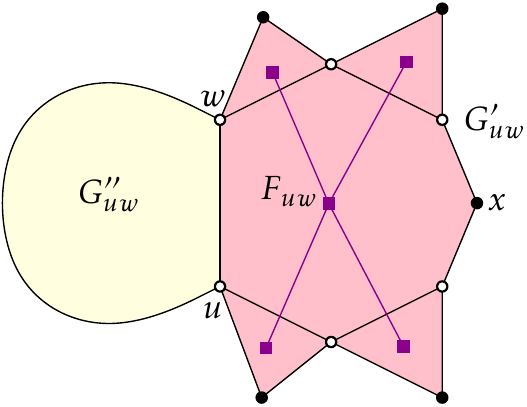}
    \end{center}
    \caption{The face $F_{uw}''$ in the proof of \lemref{even-biconnected}.}
    \figlabel{toughie}
  \end{figure}

  We now argue why such an edge $uw$ exists.  Recall that the weak dual,
  $G^{\circ}$, of $G$ is a tree whose vertices are the faces of $G$.
  Select some face, $R$, of $G$ that has $v$ on its boundary and root
  $G^\circ$ at $R$. This tree has a height, $h$, and some vertex $F$ of
  depth $h-1$ (recall that $G$ has at least two inner faces).  The face
  $F$ will be the face $F_{uw}$ described above.  We now show how to
  choose the edge $uw$.

  If $F=R$ (because $h=1$), then we take $uw$ to be an edge of $F$,
  one of whose endpoints is $v$.  Such an edge $uw$ exists since $v$
  is on $F$.  (Note that, in this case, $uw$ may be a chord or may be
  on the outer face.)

  If $F\neq R$, then we take $uw$ to be the chord of $F$ that separates
  it from $R$.  (In this case, $v$ may still be one of $u$ or $w$.)
  In either case, the edge $uw$ and the face $F=F_{uw}$ satisfy the
  condition described above.  In particular, the dual of $G'_{uw}$ is a
  star because $F$ had height $h-1$ and $v\not\in V'_{uw}$ by our choice
  of $uw$.

  Now that we have established the existence of $uw$ and $F_{uw}$,
  we will now show that we can select another vertex, $y$, from
  $V_{uw}'\setminus B'$ so that $B=B'\cup\{y\}$ is a blocking set.
  This is sufficient since $|B|$ is even so $\block_B(G)$ is an even
  cycle.  

  \begin{itemize}
  \item[$(\star)$]
  By choice, $G_{uw}'$ has at least 2 faces and each of them, other
  than $F_{uw}$, is a triangular ear incident to $F_{uw}$ and whose
  degree-2 vertex is in $B'$ (otherwise, one of those faces would have
  been handled by Case~1 or 2).
  \end{itemize}

  Let $x$ be the unique vertex of $B'$ on $F_{uw}$. The vertex $x$ has two
  neighbours on $F_{uw}$.  We claim that one of these is not in $\{u,w\}$
  and we take this vertex to be $y$.  This claim is valid because
  otherwise, $F_{uw}$ is a triangle, $xuw$, with $x\not\in\{u,w\}$.
  This case is not possible because by $(\star)$ at least one of $xu$ or
  $xw$ is incident on both $F_{uw}$ and a triangular ear $E$ of $G_{uw}'$.
  Both $x$ and the degree 2 vertex of $E$ are in $B'$.  This contradicts
  the fact that $B'$ includes at most one vertex from each face of $G$,
  including $E$.

  Let $B=B'\cup\{y\}$.  We claim that $B$ is a blocking set of $G$.
  First, note that $B$ contains at most two vertices from each face, $F$,
  of $G$, so $V(F)\setminus B'\neq \emptyset$.  We now show that $y$ is a
  leaf in the tree $G-B'$, so that $G-B$ is also a tree.

  First, observe that $xy$ is not a chord of $G$ since, by $(\star)$,
  the face incident to $xy$ other than $F_{uw}$ would have two of its
  vertices in $B'$.  Thus, in addition to $x$, $y$ has at most two
  neighbours in $G$.  One of these, $z$, is on $F_{uw}$ and $z\neq x$
  so $z\notin B'$.  Finally, $y$ may have one additional neighbour,
  which is a degree-2 vertex of a triangular ear incident on $yz$.
  In this case, by $(\star)$, this degree-2 vertex is in $B'$.  Thus,
  $yz$ is the only edge incident to $y$ in the tree $G-B'$
  so $y$ is a leaf in this tree. \qedhere
\end{enumerate}
\end{proof}

\begin{rem}\remlabel{bw}
   The proof of \lemref{even-biconnected} can be modified to prove
   something stronger than just requiring the inclusion or exclusion
   of $v\in B$.  We can specify an edge $ab$ on the outer face of $G$
   and obtain a blocking set $B$ such that $\block_B(G)$ is an even
   cycle, $a\not\in B$ and $b\in B$.  The only difference in the proof
   is ensuring that $a$ is not included in $B$. The resulting proof
   has the same three cases. Case~1 applies as long as $ab$ is not on
   the boundary of the ear $F$. Case~2 applies as long as $ab$ is not
   on the boundary of the ear $E$.  Otherwise, in Case~3, the edge $ab$
   is in the subgraph $G_{uw}''$, so there is no chance of including $a$
   in $B$.  This stronger version of \lemref{even-biconnected} is used
   in \appref{biconnected}.
\end{rem}

\begin{lem}\lemlabel{even-bridgeless}
  Every simple bridgeless outerplane graph $G$ has a blocking set $B$
  such that all cycles in $\block_B(G)$ are even.
\end{lem}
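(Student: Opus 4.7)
The plan is to prove the lemma by induction on the number of 2-connected components (blocks) of $G$. Since a blocking set can be constructed independently on each connected component, I may assume $G$ is connected. The base case, where $G$ is itself biconnected, follows immediately from \lemref{even-biconnected}, which produces a blocking set whose blocking graph is a single even cycle.

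For the inductive step, I would pick a leaf $H$ of the block-cut tree of $G$, so that $H$ is a biconnected subgraph sharing exactly one cut vertex $u$ with the rest of $G$. Let $G^\star = G - (V(H)\setminus\{u\})$. The graph $G^\star$ is a simple outerplane graph with one fewer block than $G$, and it is still bridgeless because any edge of $G^\star$ lies on a cycle in $G$, and such a cycle cannot traverse $V(H)\setminus\{u\}$: doing so would force it to enter and exit $H$ through $u$, visiting $u$ twice. The induction hypothesis then yields a blocking set $B^\star$ of $G^\star$ whose blocking graph has only even cycles.

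To extend $B^\star$ across $H$, I would apply \lemref{even-biconnected} to $H$, choosing the version that forces $u\in B_H$ if $u\in B^\star$ and the version that forces $u\notin B_H$ otherwise. This gives a blocking set $B_H$ of $H$ with $\block_{B_H}(H)$ an even cycle, so in particular $|B_H|$ is even, and the two choices agree on whether $u$ lies in the combined set $B := B^\star\cup B_H$. Because $B\cap V(H)=B_H$ and $B\cap V(H')=B^\star\cap V(H')$ for every other block $H'$ of $G$, it is straightforward to verify that $B$ is a blocking set of $G$ by combining the corresponding properties of $B_H$ and $B^\star$.

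Finally, to check the parity condition on $\block_B(G)$, the key structural claim is that each block $H_i$ of $G$ contributes the cycle $\block_{B\cap V(H_i)}(H_i)$, of length $|B\cap V(H_i)|$, to $\block_B(G)$, and that at each cut vertex $w\notin B$ the cycles of the blocks containing $w$ are fused into a single cycle whose length is the sum of the individual contributions. Since each block's contribution is even by construction, every cycle of $\block_B(G)$ has even length. The main obstacle I anticipate is proving this structural claim cleanly, by unfolding the outer facial walk of $G$ at each repeated appearance of a cut vertex and showing that its $B$-restriction decomposes as described; once this fusion rule is in hand, the parity bookkeeping is immediate.
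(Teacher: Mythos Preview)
Your proposal is correct and follows essentially the same route as the paper: induction on the number of blocks, peel off a leaf block $H$ at its unique cut vertex, apply \lemref{even-biconnected} to $H$ with the inclusion/exclusion of the cut vertex chosen to agree with $B^\star$, and then argue about how the blocking graphs combine. The ``fusion rule'' you anticipate as the main obstacle is exactly what the paper spells out in its Case~2 ($u\notin B^\star$): it names the edge $xy$ of $\block_{B^\star}(G^\star)$ and the edge of $\block_{B_H}(H)$ that straddles $u$, and observes that in $\block_B(G)$ these two edges are replaced by two new edges, merging the two even cycles into one even cycle; in Case~1 ($u\in B^\star$) the two blocking graphs simply share the vertex $u$ and no fusion occurs.
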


\begin{proof}
  The proof is by induction on the number of 2-connected components
  of $G$.  If $G$ has no 2-connected components, then we take $B$ to be
  the empty blocking set.  If $G$ has only one 2-connected component,
  then we apply \lemref{even-biconnected}.
  Otherwise, select a 2-connected component, $C$, that
  shares exactly one vertex, $v$, with the rest of $G$.  Let
  $G'=G-(V(C)\setminus\{v\})$ and apply induction on $G'$
  to obtain a blocking set, $B'$, of $G'$ such that $\block_{B'}(G')$
  has only even cycles.  There are two cases to consider:
  \begin{enumerate}
    \item If $B'$ contains $v$, then we apply the first part of
    \lemref{even-biconnected} to obtain a blocking set $B''$ of $C$
    such that $\block_{B''}(C)$ is an even cycle and $v\in B''$.  We take
    $B=B'\cup B''$, which clearly forms a blocking set of $G$.  
    The blocking graph $\block_B(G)$ is simply the union of the
    two blocking graphs $\block_{B'}(G)$ and $\block_{B''}(C)$, which have
    only the vertex $v$ in common.  Thus, every cycle in $\block_B(G)$
    is also a cycle in one of these two graphs, so it has even length.

    \item If $B'$ does not contain $v$, then we apply the second part
    of \lemref{even-biconnected} to obtain a blocking set $B''$ of $C$
    such that $\block_{B''}(C)$ is an even cycle and $v\not\in B''$.
    We take $B=B'\cup B''$.

    Refer to \figref{block-merge}.  
    Starting at some appropriate vertex in $V(G)\setminus V(C)$
    in the facial walk on the outer face of $G$, there is a last vertex,
    $x\in V(G')\cap B'$, encountered before the walk encounters the first
    vertex $u\in V(C)\cap B''$ and there is a last vertex $w\in V(C)\cap
    B''$ encountered before the walk returns to the next vertex $y\in
    V(G')\cap B'$.  The edge $xy$ is in $\block_{B'}(G')$ and the edge
    $vw$ is in $\block_{B''}(C)$.

    \begin{figure}
       \begin{center}
         \begin{tabular}{m{.45\textwidth}m{.05\textwidth}m{.45\textwidth}}
          \includegraphics[width=.45\textwidth]{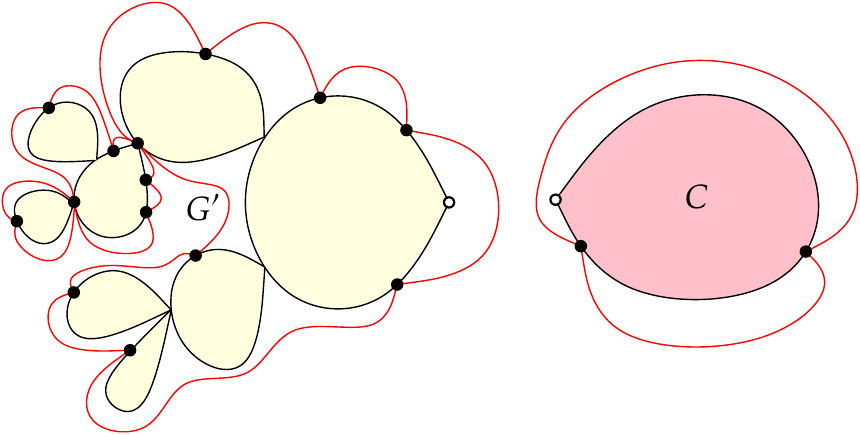} &
          $\Rightarrow$ &
          \includegraphics[width=.45\textwidth]{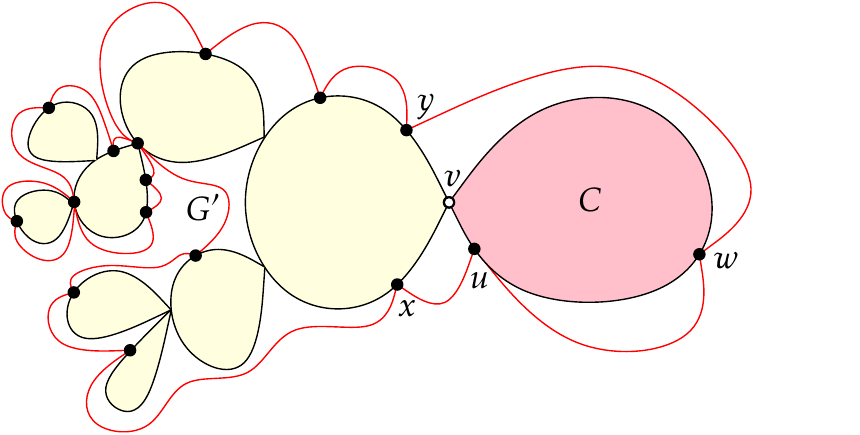}
         \end{tabular}
       \end{center}
       \caption{Case~2 in the proof of \lemref{even-bridgeless}.}
       \figlabel{block-merge}
    \end{figure}

    Since every blocking graph is a bridgeless cactus graph
    (\obsref{cactus}), each of these edges is part of one even cycle
    in its respective graph. In $\block_{B}(G)$ these two cycles are
    merged by removing the edges $xy$ and $vw$ and adding the edges
    $xv$ and $yw$.  The resulting cycle is even.  Every other cycle
    in $\block_B(G)$ is also a cycle in one of $\block_{B'}(G')$ or
    $\block_{B''}(C)$ so it has even length. \qedhere
  \end{enumerate}
\end{proof}

\begin{lem}\lemlabel{even}
  Every simple outerplane graph $G$ has a blocking set $B$ such that
  all cycles in $\block_B(G)$ are even.
\end{lem}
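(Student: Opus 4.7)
The plan is to induct on $|V(G)|$, reducing to \lemref{even-bridgeless} whenever $G$ is bridgeless. I would dispose of the base case $|V(G)|\le 1$ with $B=\emptyset$ and handle disconnected $G$ componentwise. Since \lemref{even-bridgeless} gives the conclusion when $G$ has no bridges, I would assume $G$ has at least one bridge and pick a leaf $L$ of the block-cut tree. Such an $L$ is either a biconnected outerplane block attached to the rest of $G$ at a single cut vertex $v$, or a single bridge edge $uv$ with $v$ of degree~$1$.

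In the biconnected-leaf case the argument would mimic \lemref{even-bridgeless}: let $G'=G-(V(L)\setminus\{v\})$, apply induction to obtain a blocking set $B'$ of $G'$ with $\block_{B'}(G')$ having only even cycles, and use \lemref{even-biconnected} to choose a blocking set $B''$ of $L$ with $\block_{B''}(L)$ an even cycle and with $v\in B''$ iff $v\in B'$. Setting $B=B'\cup B''$, the blocking graph of $G$ is either a one-vertex glue of two cacti (when $v\in B'$) or the merge-two-cycles-into-one-of-length-$|C_1|+|C_2|-2$ operation as in Case~2 of \lemref{even-bridgeless} (when $v\notin B'$); in both subcases every cycle of $\block_B(G)$ is even.

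In the bridge-leaf case I would induct on $G'=G-v$ to obtain $B'$ and then extend by the rule: set $B=B'\cup\{v\}$ if $u\in B'$, and $B=B'$ otherwise. Because $v$ is on no inner face and in no $2$-connected component, $B$ is automatically a blocking set. The outer facial walk of $G$ differs from that of $G'$ only by inserting the detour $\dots,u,v,u,\dots$ through the pendant, which contributes to $\block_B(G)$ a cycle of length~$2$ of parallel edges $uv$ when both $u,v\in B$ and nothing at all when both are outside~$B$. Those are the only two safe extensions.

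The main obstacle is precisely this bridge-leaf case. A mixed choice either creates a self-loop at $u$ (an odd $1$-cycle) when $u\in B$ and $v\notin B$, or it subdivides an edge of $\block_{B'}(G')$ and flips the parity of the cycle through that edge when $u\notin B$ and $v\in B$; either would violate the even-cycle invariant. Once the ``both or neither'' rule at pendant bridges is identified, the verification reduces to the same cactus bookkeeping already used in \lemref{even-bridgeless}.
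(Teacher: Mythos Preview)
Your argument is correct, but it takes a different route from the paper. The paper inducts on the number of bridges rather than on $|V(G)|$: it picks \emph{any} bridge $uw$, contracts it to a single vertex $v$ to obtain $G'$, applies induction to $G'$, and then un-contracts with the rule ``$u,w\in B$ if $v\in B'$, and $u,w\notin B$ otherwise.'' In the first case the only new cycle in $\block_B(G)$ is the length-$2$ cycle on $u,w$; in the second case $\block_B(G)=\block_{B'}(G')$. This is exactly your ``both-or-neither'' rule at a pendant bridge, but applied uniformly to every bridge via contraction, so the biconnected-leaf case never arises: once all bridges are gone, \lemref{even-bridgeless} finishes the job. Your approach instead peels off a leaf block, which forces you to redo the merge-two-even-cycles analysis from \lemref{even-bridgeless} whenever that leaf block happens to be biconnected. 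That analysis is fine (and \obsref{cactus} guarantees the relevant edges lie on cycles even when $G'$ has bridges), but it is extra work the contraction trick avoids. One small slip: the merged cycle in your Case~4a has length $|C_1|+|C_2|$, not $|C_1|+|C_2|-2$, since you remove one edge from each cycle and add two new ones; the parity conclusion is unaffected.
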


\begin{proof}
  The proof is by induction on the number of bridges of $G$.  If $G$
  has no bridges, then we apply \lemref{even-bridgeless}.  Otherwise,
  select some bridge $uw$ of $G$ and contract it to obtain a graph $G'$
  in which $uw$ corresponds to a single vertex $v$.  By induction,
  we obtain a blocking set $B'$ of $G'$ such that $\block_{B'}(G')$
  has only even cycles (or is empty).  There are two cases to consider:
  \begin{enumerate}
    \item If $v\in B'$, then we take $B=B'\cup\{u,w\}\setminus\{v\}$. This introduces exactly one new cycle in $\block_B(G)$ that is not present in $\block_{B'}(G')$ and this cycle has length 2.

    \item If $v\not\in B'$, then we take $B=B'$, so
    $\block_B(G)=\block_{B'}(G')$. \qedhere
  \end{enumerate}
\end{proof}

Finally, have all the tools to prove our main result on outerplane graphs:

\begin{thm}\thmlabel{outerplane}
  For every outerplane graph, $G$, $\pi_f(G)\le 11$.
\end{thm}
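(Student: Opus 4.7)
The proof should be a straightforward assembly of the machinery developed earlier in the section. The plan is to reduce to the simple case, extract a ``good'' blocking set, colour the blocking graph with 7 colours, and then lift this colouring to $G$ using four extra colours.

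First, I would invoke \lemref{non-simple} to reduce to the case where $G$ is a simple outerplane graph; this is safe because \lemref{non-simple} says that any facial nonrepetitive colouring of the underlying simple graph works for $G$ as well. So from now on assume $G$ is simple.

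Next, I would apply \lemref{even} to obtain a blocking set $B$ of $G$ with the property that every cycle of $\block_B(G)$ has even length. By \obsref{cactus}, $\block_B(G)$ is a bridgeless cactus graph, so it is in particular a cactus graph with no odd cycles. \lemref{cactus} then yields a facial nonrepetitive $7$-colouring of $\block_B(G)$.

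Finally, I would feed this $7$-colouring of $\block_B(G)$ into \lemref{k-plus-four} with $k=7$, which produces a facial nonrepetitive $(4+7)$-colouring of $G$, i.e.\ $\pi_f(G)\le 11$. There is essentially no obstacle: all of the real work (the existence of an even blocking graph, and the $7$-colouring of an even cactus) has been done in the preceding lemmas, and the bound $11 = 4 + 7$ emerges directly from combining them.
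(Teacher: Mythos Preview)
Your proposal is correct and matches the paper's proof essentially line for line: reduce to simple via \lemref{non-simple}, apply \lemref{even} to get a blocking set whose blocking graph has only even cycles, use \lemref{cactus} to $7$-colour it, and finish with \lemref{k-plus-four}. The only difference is cosmetic---you explicitly cite \obsref{cactus} to justify that $\block_B(G)$ is a cactus, whereas the paper relies on the parenthetical in the statement of \lemref{cactus}.
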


\begin{proof}
By \lemref{non-simple}, we may assume that $G$ is simple.  From
\lemref{even}, $G$ has a blocking set $B$ such that $\block_B(G)$
has no odd cycles.  Therefore, by \lemref{cactus}, $\pi_f(\block_B(G))\le
7$.  Using this with \lemref{k-plus-four} implies that $\pi_f(G)\le 11$.
\end{proof}

\section{Plane Graphs}

In this section, we show how to reuse the ideas from Bar\'at and Czap
\cite{barat2013facial} to facially nonrepetitively 22-colour every plane graph.
Some modifications are needed because Bar\'at and Czap use a nonrepetitive
12-colouring of outerplanar graphs whereas our \thmref{outerplane} provides an
\emph{facial} nonrepetitive 11-colouring of \emph{outerplane} graphs.

\begin{thm}
   Let $r=\max\{\pi_f(H):\text{$H$ is outerplane}\}$.  Then, for every
   plane graph $G$, $\pi_f(G)\le 2r$.
\end{thm}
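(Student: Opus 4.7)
The plan is to adapt the approach of Bar\'at and Czap~\cite{barat2013facial}, who bounded $\pi_f(G) \le 2\cdot 12$ for plane $G$ by using an ordinary nonrepetitive 12-colouring of outerplanar subgraphs. The only essential change is that we must replace their ingredient with the facial nonrepetitive $r$-colouring of outerplane (embedded) subgraphs supplied by \thmref{outerplane}.

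The scheme is to vertex-partition $V(G)$ into two sets $V_1, V_2$ such that each $G[V_i]$, with the inherited embedding, is outerplane. We then colour $G[V_1]$ facially nonrepetitively using colours $\{1,\dots,r\}$ and $G[V_2]$ using $\{r+1,\dots,2r\}$, both guaranteed by the definition of $r$. To verify that this is a facial nonrepetitive $2r$-colouring of $G$, pick any facial path $P$ of $G$, decompose it into maximal same-parity runs, and apply \lemref{interleave} with one palette playing the role of $\mathcal{A}$ and the other that of $\mathcal{B}$ to conclude nonrepetitivity of the whole colour sequence of $P$.

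The natural way to obtain such a partition is by BFS layering from a vertex on the outer face: set $V_1 = \bigcup_{i \text{ even}} L_i$ and $V_2 = \bigcup_{i \text{ odd}} L_i$, where $L_i$ is the set of vertices at BFS distance $i$. Because an edge of $G$ can cross at most one layer, each $G[V_i]$ decomposes as a disjoint union of single-layer pieces $G[L_{2k}]$ (respectively $G[L_{2k+1}]$), and each such piece is outerplane because its vertices lie on the outer boundary of the subgraph obtained by peeling off all shallower layers. Moreover, since consecutive vertices along a face of $G$ differ in level by at most one, each maximal same-parity run along $P$ lies within a single layer and along the boundary of a face there, so it is a facial path in the appropriate outerplane piece and is therefore nonrepetitively coloured.

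The main obstacle is checking the hypothesis of \lemref{interleave} that the concatenated $V_1$-subsequence of $P$ is nonrepetitive as a \emph{single} sequence, not merely run by run: distinct even layers share the palette $\{1,\dots,r\}$, so an apparent repetition could in principle straddle two layer-components. Bar\'at and Czap avoid this because full nonrepetitive colouring of an outerplanar subgraph handles arbitrary sequences of vertices lying along any path of the subgraph; with only facial nonrepetitivity at our disposal, we must either refine the partition so that the $V_1$-subsequence of every facial path of $G$ lies on a single face of $G[V_1]$ (so that the facial $r$-colouring alone suffices), or coordinate the palettes across layers to forbid cross-layer repetitions. Handling this subtlety is where the new work of the proof lies.
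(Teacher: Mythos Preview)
You have correctly identified the crux of the difficulty, but your proposal stops precisely where the real work begins, and the layering you chose makes that work harder than necessary.

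First, BFS layering from a single root is the wrong decomposition. With BFS levels, a single face of $G$ can meet arbitrarily many levels (its boundary walk may go $0,1,2,3,2,1,0,\ldots$), so the even-parity subsequence of a facial path can visit several distinct even layers $L_0,L_2,L_4,\ldots$, which lie in different components of $G[V_1]$ but share the palette $\{1,\dots,r\}$. Your worry about cross-layer repetitions is then unavoidable. The paper instead uses the \emph{peeling} layering: $V_0$ is the set of vertices on the outer face of $G$, and $V_i$ is the set of vertices on the outer face of $G-(V_0\cup\cdots\cup V_{i-1})$. This layering has the key property that every inner face of $G$ meets only two consecutive layers $V_i$ and $V_{i+1}$, so along any facial path the ``outer'' runs all live in a \emph{single} layer $V_i$.

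Second, even with peeling layers, the concatenation $Q_1,\ldots,Q_k$ of the $V_i$-runs along a facial path $P$ is not a facial path---indeed not even a walk---in $G[V_i]$, because $G$ need not contain an edge from the last vertex of $Q_j$ to the first vertex of $Q_{j+1}$. The paper resolves this by \emph{augmenting} $G$ to a plane supergraph $G^+$: inside each inner face $F$ (with facial walk $W$ on layers $V_i\cup V_{i+1}$), delete the $V_{i+1}$-vertices from $W$ to obtain a cyclic sequence $W'$ on $V_i$, and add an edge between every pair of consecutive vertices of $W'$, embedded inside $F$. Then $W'$ is a genuine facial walk in $G^+[V_i]$, and the peeling layers of $G$ and $G^+$ coincide. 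One now colours each $G^+[V_i]$ (not $G[V_i]$) facially nonrepetitively with the appropriate palette. For a facial path $P$ on an inner face, the concatenation $Q_1,\ldots,Q_k$ is a facial path in $G^+[V_i]$, hence nonrepetitively coloured as a whole, while each $P_j$ is a facial path on the outer face of $G^+[V_{i+1}]$; \lemref{interleave} finishes the argument exactly as you outlined.

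In short: replace BFS layers by peeling layers, and add the augmentation edges so that the facial $r$-colouring of the outerplane pieces $G^+[V_i]$ already certifies nonrepetitivity of the entire $V_i$-subsequence. No cross-layer coordination of palettes is needed.
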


\begin{proof}
   For any plane graph, $H$, the \emph{peeling layering} of $H$ is
   a partition of $V(H)$ into sets as follows.  Let $V_0(H)$ be the
   vertices on the outer face of $H$ and let $V_i(H)$, $i\ge 1$, be the
   vertices on the outer face of $H-(V_0(H)\cup\cdots\cup V_{i-1}(H))$.

   We augment $G$ to obtain a plane graph $G^+$ in the following way.
   For each inner face $F$ of $G$, let $W$ be the facial walk of $F$.
   The walk $W$ contains only vertices from $V_i(G)$ and $V_{i+1}(G)$
   for some $i$.  Remove from $W$ all vertices in $V_{i+1}(G)$ to
   obtain a cyclic sequence $W'$ of vertices from $V_i(G)$.  For any
   two consecutive vertices $u,w$ in $W'$, we add the edge $uw$
   to $G^+$ and embed it inside the face $F$. This construction has
   the following implications: (a)~The resulting graph $G^+$
   is still plane (though not necessarily simple) and, for all
   $i$, $V_i(G)=V_i(G^+)$. From this point onward, we use the notation
   $V_i=V_i(G)=V_i(G^+)$.
   (b)~The cyclic sequence $W'$ defined above is a facial walk
   in $G^+$ and, since it contains only vertices in $V_i$, it is also
   a facial walk in $G^+[V_i]$.

   For each $i$, $G^+[V_i]$ is outerplane.  To colour $G$ we use
   \thmref{outerplane} to facially nonreptitively colour $G^+[V_i]$ with
   $\{1,\ldots,11\}$ if $i$ is even or $\{12,\ldots,22\}$ if $i$ is odd.
   This defines a colouring of $G$ that we now prove is facially
   nonrepetitive.

   Let $P$ be a facial path, $F$, in $G$. The graphs $G^+[V_0]$ and $G$
   have the same outer face so, if $P$ is on the outer face,
   then the colour sequence of $P$ is not a repetition
   since our colouring is facially nonrepetitive for $G^+[V_0]$.

   Therefore, $F$ is an inner face and all vertices on $F$ are in $V_i\cup
   V_{i+1}$ for some $i$.  Write $P$ as $P_0,Q_1,P_1,\ldots,Q_k,P_k$,
   where each $Q_j$ consists of vertices from $V_i$ and each $P_j$
   consists of vertices from $V_{i+1}$.  Notice that, for each
   $j\in\{1,\ldots,k-1\}$, $G^+[V_i]$ contains an edge joining the
   last vertex in $Q_j$ to the first vertex in $Q_{j+1}$.  Indeed,
   $Q_{1},\ldots,Q_k$ is a facial path in $G^+[V_i]$ (It is a contiguous
   subsequence of the sequence $W'$ defined above.)  Next, observe that
   each $P_j$ is a facial path on the outer face of $G^+[V_{i+1}]$.
   Therefore, the colour sequence determined by $P$ is of the form
   $A_0,B_1,A_1,\ldots,B_k,A_k$ (with $A_j$ corresponding to $P_j$
   and $B_j$ corresponding to $Q_j$).  The sequence $B_1,\ldots,B_k$ is
   nonrepetitive and each sequence $A_j$ is non-repetitive.  Therefore,
   by \lemref{interleave}, the colour sequence determined by $P$
   is nonrepetitive.
\end{proof}

\section{Concluding Remarks}

We note that the proofs in this paper lead to straightforward linear-time
algorithms.  After the appropriate decomposition steps, there are
essentially two subproblems: (1)~finding an appropriate blocking
set in a biconnected outerplane graph (\lemref{even-biconnected}) and
(2)~colouring cactus graphs with no odd cycles (\lemref{cactus}). The
proof of \lemref{cactus} is easily made into a linear-time algorithm. The
proof of \lemref{even-biconnected} can be implemented by a recursive
ear-cutting algorithm that implements \lemref{biconnected} followed by
a traversal of the dual tree in order to find the face $F_{uw}$ used in
the proof of \lemref{even-biconnected}.

It seems unlikely that our upper bound of 11 for the facial nonrepetitive
chromatic number of outerplane graphs (and hence the bound of 22 for
plane graphs) is tight.  (Recall that the best known lower bounds
are 4 and 5, respectively \cite{barat2013facial}.)  Thus, an obvious
direction for future work is to improve these bounds.  Our proof of
\lemref{hitting_plus_four} uses a nonrepetitive 4-colouring of trees, but a
facial nonrepetitive colouring would also be sufficient.  This leads
naturally to the following problem:

\begin{op}
   Is $\pi_f(T) \le 3$ for every tree, $T$? 
\end{op}

\bibliographystyle{plain}
\bibliography{facial}

\appendix
\section{Biconnected Outerplane Graphs}
\applabel{biconnected}

\begin{lem}\lemlabel{good-one}
  If $G$ is a biconnected outerplane graph, then $G$ has a blocking set $B$
  such that $|B|\not\in\{5,7,9,10,14,17\}$.
\end{lem}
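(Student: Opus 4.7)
My plan is to start from the blocking set $B_0$ supplied by \lemref{biconnected}, which satisfies $|V(F)\cap B_0|=1$ for every inner face $F$. Let $k=|B_0|$. If $k\notin\{5,7,9,10,14,17\}$ we take $B=B_0$ and are done, so we may assume $k$ lies in this bad set and try to enlarge $B_0$ by appending leaves of the tree $G-B_0$.

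The only tool I need is an add-a-leaf step: if $x$ is a leaf of $G-B$ and $|V(F)\setminus B|\ge 2$ for every face $F$ containing $x$, then $B\cup\{x\}$ is again a blocking set, since $(G-B)-x$ remains a tree and each face retains a non-$B$ vertex. Starting from $B_0$ the side condition is automatic, because simple $2$-connected outerplane faces have $|V(F)|\ge 3$ and only one vertex in $B_0$; moreover $G-B_0$ has at least two vertices (one face alone contributes two non-$B_0$ vertices) and hence at least two leaves, so the step is always available at least once.

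For $k\in\{5,7,10,14,17\}$ one such step yields $|B|=k+1\in\{6,8,11,15,18\}$, all safely outside the bad set. The only case requiring extra work is $k=9$, where $k+1=10$ is still bad, and so we need two successive add-a-leaf steps to reach $|B|=11$. After the first step with added vertex $x_1$, the faces on which $|V(F)\cap B_1|=2$ are precisely those containing $x_1$, so the second step succeeds as long as we pick a leaf $x_2$ of $G-B_1$ that does not lie on a triangular face shared with $x_1$.

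The main obstacle will be guaranteeing such an $x_2$ exists. I would first verify that $k=9$ forces $n\ge 12$: the bound $|B_0|\le f$ gives $f\ge 9$ and hence $n\ge f+2\ge 11$, while the boundary case $n=11$ would force $G$ to be maximal outerplane with $f=9$ and $B_0$ to consist of nine degree-$2$ vertices, which a short chord-count argument rules out (otherwise $G$ would need two vertices of degree $10$, requiring more chords than an $11$-vertex outerplane graph admits). So $G-B_1$ has at least two vertices and therefore at least two leaves. To ensure the triangular faces containing $x_1$ are sufficiently localized, I would choose $x_1$ using the weak dual tree of $G$, placing it on a face deep in that tree in the spirit of the choice of $F_{uw}$ in the proof of \lemref{even-biconnected}; this should confine the bad faces to a single subtree of the weak dual and leave a leaf of $G-B_1$ elsewhere to play the role of $x_2$. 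The fallback, should some corner case resist, is to decrease rather than increase $|B|$: any $y\in B_0$ with exactly one neighbour outside $B_0$ can be removed to reach $|B|=8$, and such a $y$ should exist precisely in the configurations where the add-two-leaves strategy breaks down.
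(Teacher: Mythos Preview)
Your add-a-leaf strategy is viable and differs from the paper's route, but the $k=9$ case is only sketched, and the fallback you propose is actually unavailable: no $y\in B_0$ has exactly one neighbour outside $B_0$.  Indeed, any edge $yz$ with $y\in B_0$ lies on some inner face $F$, and since $|V(F)\cap B_0|=1$ with $y$ the unique member, necessarily $z\notin B_0$; biconnectivity then gives $y$ at least two neighbours, all outside $B_0$, so deleting $y$ from $B_0$ always creates a cycle in $G-(B_0\setminus\{y\})$.

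The good news is that the two-step argument is simpler than your sketch suggests and requires no weak-dual placement of $x_1$.  After adding a leaf $x_1$, the only face that could be emptied by a further addition $x_2\in V(G-B_1)$ is a \emph{triangle} containing both $x_1$ and $x_2$; in a triangle $x_1x_2$ is an edge, so $x_2$ would be a neighbour of $x_1$ in $G-B_0$, and $x_1$ (being a leaf there) has exactly one such neighbour $w$.  Hence any leaf of $G-B_1$ other than $w$ works, and your (correct) $n\ge 12$ bound makes $G-B_1$ a tree on at least two vertices, which has at least two leaves and therefore one distinct from $w$.

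For comparison, the paper avoids the two-step analysis altogether.  It reserves an ear $E$ with chord $uv$, applies the strengthening of \lemref{even-biconnected} described in \remref{bw} to $G'=G-(V(E)\setminus\{u,v\})$ to obtain a blocking set $B'$ of \emph{even} size with $v\in B'$ and $u\notin B'$, observes that $B'$ is already a blocking set of all of $G$, and then only $|B'|\in\{10,14\}$ require a single append, namely $v$'s degree-$2$ neighbour on the reserved ear $E$.  Forcing even parity up front kills $\{5,7,9,17\}$ in one stroke.
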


\begin{proof}
  If $G$ is a cycle, take $B$ to be two consecutive vertices on $G$
  and we are done.
  Otherwise, select an ear, $E$ of $G$ and let $uv$ be $E$'s chord.
  Then apply the stronger version of \lemref{even} discussed in
  \remref{bw} to the graph $G'=G-(V(E)\setminus\{u,v\})$ to obtain a
  blocking set $B'$ of even size that contains $v$ and not $u$.

  Note that, since $v\in B'$ and $v$ is on $E$,  $G-B'$ has no cycles.
  Furthermore, since $u\not\in B'$, $G-B'$ is connected, so $B'$
  is a blocking set of $G$.  Since $|B'|$ is even, 
  $|B'|\not\in\{5,7,9,17\}$, so if $|B'|\not\in\{10, 14\}$, then we are
  done with $B=B'$. Otherwise, let $y$ be $v$'s degree-2 neighbour on $E$
  and take $B=B'\cup\{y\}$.
\end{proof}

\begin{cor}
  If $G$ is an outerplane graph with at most one 2-connected component, 
  then $\pi_f(G)\le 7$.
\end{cor}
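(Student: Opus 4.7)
The plan is to split into two cases based on whether $G$ has a $2$-connected component. By \lemref{non-simple} we may assume $G$ is simple, and since facial nonrepetitive colouring can be done independently on each connected component, we may also assume $G$ is connected. If $G$ has no $2$-connected component then $G$ is a tree, and $\pi_f(G)\le\pi(G)\le 4$ by \thmref{tree}. Otherwise $G$ has exactly one $2$-connected component $H$, and $G$ is $H$ together with some trees pendant at vertices of $H$.

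In this remaining case I would apply \lemref{good-one} to $H$ to obtain a blocking set $B$ of $H$ with $|B|\notin\{5,7,9,10,14,17\}$. Every inner face of $G$ is already an inner face of $H$ (pendant trees create no new inner faces) and $H$ is the only $2$-connected component of $G$, so $B$ is also a blocking set of $G$. By \lemref{k-plus-four}, it therefore suffices to exhibit a facial nonrepetitive $3$-colouring of $\block_B(G)$.

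For this I would analyse the structure of $\block_B(G)$. Because $H$ is biconnected, its outer facial walk visits each vertex of $H$ exactly once, so $\block_B(H)$ is the simple cycle $C_{|B|}$. The outer facial walk of $G$ is obtained from that of $H$ by inserting, at each cut vertex $c$ of $G$, a detour through each pendant tree attached at $c$; these detours contain no vertex of $B$, so after removing non-$B$ vertices the cyclic sequence $W'$ traces the same cycle as $\block_B(H)$ except that each cut vertex $c\in B$ appears multiple times consecutively. Hence $\block_B(G)$ is $C_{|B|}$ with some self-loops added at those cut vertices that lie in $B$. By \thmref{cycle}, $C_{|B|}$ admits a nonrepetitive $3$-colouring (with fewer colours if $|B|\le 2$). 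To confirm this yields a facial nonrepetitive colouring of $\block_B(G)$, I would observe that every facial path of $\block_B(G)$ is either a subpath of $C_{|B|}$ (on the inner face of the main cycle, or on the outer face up to the next self-loop, since a contiguous subsequence of $W'$ that is a path cannot straddle a repeated cut vertex), or a single vertex (on the face bounded by a self-loop).

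The main obstacle is the careful bookkeeping around cut vertices: pinning down precisely how the outer facial walk of $G$ decomposes at a cut vertex visited several times, so as to justify the cycle-plus-self-loops description of $\block_B(G)$ and to check that no facial path of $\block_B(G)$ can straddle such a repetition.
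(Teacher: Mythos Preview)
Your proposal is correct and follows essentially the same route as the paper: obtain a blocking set $B$ of the unique biconnected component via \lemref{good-one}, note that $B$ is a blocking set of $G$, identify $\block_B(G)$ as the cycle $C_{|B|}$ with some self-loops attached at cut vertices, colour the cycle with three colours by \thmref{cycle}, and finish with \lemref{k-plus-four}. The only difference is cosmetic: where you analyse the facial paths of $\block_B(G)$ directly to show the self-loops are harmless, the paper simply invokes \lemref{non-simple} (adding loops to a simple outerplane graph cannot increase $\pi_f$), which would spare you the ``careful bookkeeping'' you flag as the main obstacle.
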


\begin{proof}
  If $G$ is a tree, then $\pi_f(G)\le 4$, so we may assume $G$ contains
  exactly one 2-connected component, $G'$.  Apply \lemref{good-one} to
  $G'$ to obtain a blocking set $B$ with $|B|\not\in\{5,7,9,10,14,17\}$
  and observe that $B$ is also a blocking set of $G$.  The blocking graph
  $\block_B(G')$ is a cycle, $C$, that has a nonrepetitive 3-colouring.
  The blocking graph $\block_B(G)$ consists of $C$ and possibly some
  self loops so, by \lemref{non-simple}, $\block_B(G)$ has a facial
  nonrepetitive 3-colouring.  Therefore, by \lemref{hitting_plus_four},
  $\pi_f(G)\le 7$.
\end{proof}

\end{document}